\documentclass[11pt]{amsart}

\usepackage{amsfonts, amssymb, amscd}
\numberwithin{equation}{section}

\usepackage[symbol]{footmisc}

\usepackage{bm}
\usepackage{verbatim}
\usepackage{amssymb}
\usepackage{mathrsfs}
\usepackage{graphicx}
\usepackage[nospace,noadjust]{cite}
\usepackage{tikz-cd}
\usepackage{subcaption}
\usepackage{subfiles}
\usepackage[toc,page]{appendix}
\usepackage{mathtools}
\usepackage{comment}
\usepackage{enumerate}
\usepackage{enumitem}

\usepackage{graphicx}
\graphicspath{{images/}}

\usepackage{appendix}
\usepackage{hyperref}

\usepackage{todonotes}

\newcommand{\pr}{^{\prime}}
\newcommand{\prpr}{^{\prime\prime}}

\newcommand{\Cc}{\mathbb{C}}
\newcommand{\KK}{\mathbb{K}}

\newcommand{\Qq}{\mathbb{Q}}
\newcommand{\QQ}{\mathbb{Q}}
\newcommand{\Rr}{\mathbb{R}}

\newcommand{\Zz}{\mathbb{Z}}

\newcommand{\Span}{\operatorname{Span}}
\newcommand{\alct}{a\text{-}\operatorname{lct}}

\newcommand{\Center}{\operatorname{center}}

\newcommand{\mld}{{\rm{mld}}}

\newcommand{\lct}{\operatorname{lct}}

\newcommand{\Supp}{\operatorname{Supp}}

\newcommand{\mult}{\operatorname{mult}}

\newcommand{\lf}{\lfloor}
\newcommand{\rf}{\rfloor}

\newcommand{\Ff}{\mathcal{F}}
\newcommand{\Oo}{\mathcal{O}}
\newcommand{\Ii}{{\Gamma}}

\newcommand{\xto}{\xrightarrow{f}}

 \usepackage{todonotes}


\newcommand\chen[1]{\todo[color=pink!40]{#1}} 


\newtheorem{thm}{Theorem}[section]
\newtheorem{conj}[thm]{Conjecture}

\newtheorem{lem}[thm]{Lemma}

\newtheorem{prop}[thm]{Proposition}

\theoremstyle{definition}
\newtheorem{defn}[thm]{Definition}
\newtheorem{rem}[thm]{Remark}

\theoremstyle{definition}

\hyphenpenalty=5000
\tolerance=1000

\begin{document}

\title{Boundedness of ($\epsilon, n$)-Complements for projective generalized pairs of Fano type}


\author{Guodu Chen}
\address{Guodu Chen, Beijing International Center for Mathematical Research, Peking University, Beijing 100871, China}
\email{gdchen@pku.edu.cn}

\author{Qingyuan Xue}
\address{Qingyuan Xue, Department of Mathematics, The University of Uath, Salt Lake City, UT 84112, USA}
\email{xue@math.utah.edu}

\begin{abstract}
We show the existence of $(\epsilon,n)$-complements for $(\epsilon,\Rr)$-complementary projective generalized pairs of Fano type $(X,B+M)$ when either the coefficients of $B$ and $\mu_j$ belong to a finite set or the coefficients of $B$ belong to a DCC set and $M'\equiv 0$, where $M'=\sum\mu_jM_j'$ and $M_j'$ are b-Cartier nef divisors. 
\end{abstract}

\date{\today}

\maketitle
\pagestyle{myheadings}\markboth{\hfill  Guodu Chen and Qingyuan Xue \hfill}{\hfill Boundedness of ($\epsilon, n$)-Complements for projective generalized pairs of Fano type\hfill}

\tableofcontents

\section{Introduction}
We work over the field of complex numbers $\Cc.$ 

In \cite{Sho92}, Shokurov introduced the theory of complements to investigate log flips of threefolds, and it turns out that the theory plays an important role in birational geometry. The theory of complements has been studied in recent years; for example, see \cite{Bir19,HLS19} for the boundedness of log canonical complements, and see also \cite{Sho00,PS01,Bir04,PS09} for more on boundedness of complements. We refer the readers to \cite{HLS19,CH20} for the applications of the theory of complements.

In recent years a new concept of space, generalized pairs, has evolved. Generalized pairs appear naturally in the study of birational geometry in higher dimension.
They were first introduced in \cite{BZ16}, and we refer the readers to \cite{Bir20} for more motivations and applications. It is natural to consider the boundedness of complements for generalized pairs. Indeed, Birkar proved the boundedness of generalized log canonical complements for generalized pairs when the coefficients of the boundaries belong to a hyperstandard set \cite{Bir19}, and the first author proved the boundedness of generalized log canonical complements for generalized pairs with DCC coefficients \cite{Chen20} by using the Diophantine approximation in \cite{HLS19}. It is worth mentioning that in \cite{Chen20}, the first author have studied the complements in a more general setting. More precisely, the nef part of the generalized pair is allowed to have irrational coefficients; see Definition \ref{defn:complements}.

However, the above results were mainly focus on the boundedness of generalized log canonical complements. So it is natural to consider the boundedness of complements with good singularities, that is, generalized $\epsilon$-log canonical complements (See Definition \ref{defn:complements}). In this paper, we deal with the following deep conjecture regarding the boundedness of $(\epsilon,n)$-complements for generalized pairs which is an analogue of \cite[Conjecture 1.1]{CH20} in the setting of generalized pairs.

\begin{conj}
	\label{conj a complement}
	Let $d,p$ be two positive integers, $\epsilon$ a non-negative real number, and $\Ii \subseteq [0,1]$ a DCC set. Then there exists a positive integer $n$ divisible by $p$ depending only on $d, p, \epsilon$ and $\Ii$ satisfying the following.
	
	Assume that $(X,B+M)$ is a generalized pair with data $X'\xto X\to Z$ and $M'=\sum \mu_j M_j'$, $X\to Z$ a contraction and $z\in Z$ a (not necessarily closed) point such that
	\begin{enumerate}     
		\item $\dim X=d$,
		\item $X$ of Fano type over $Z$,
		\item $B\in \Ii$, that is, the coefficients of $B$ belong to $\Ii$,
		\item $M_j'$ is a b-Cartier nef$/Z$ divisor and $\mu_j\in\Ii$ for any $j$, and
		\item $(X/Z\ni z,B+M)$ is $(\epsilon,\Rr)$-complementary.
	\end{enumerate} 
	Then there is an $(\epsilon,n)$-complement $(X/Z\ni z,B^{+}+M^+)$ of $(X/Z\ni z,B+M).$ Moreover, if $\Span_{\Qq_{\ge0}}(\bar{\Ii}\cup\{\epsilon\}\backslash\Qq)\cap (\Qq\backslash\{0\})=\emptyset$, then we may pick $B^+\ge B$ and $\mu_j^+\ge\mu_j$ for any $j$, where $M^{+\prime}=\sum \mu_j^+M_j'$.
\end{conj}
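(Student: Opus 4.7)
The plan is to reduce Conjecture~\ref{conj a complement} to the finite-coefficient case (the main theorem of this paper, per the abstract) by a \emph{simultaneous} Diophantine approximation of $B$ and the $\mu_j$, in the style of \cite{HLS19,Chen20}. Concretely, using that $\Ii$ is DCC, I would produce a finite subset $\Ii_0\subseteq[0,1]\cap\Qq$ depending only on $\Ii$, convex weights $r_1,\dots,r_k>0$ with $\sum_i r_i=1$, and for each $i$ a boundary $B^{(i)}$ together with rationals $\mu_j^{(i)}$, all with coefficients in $\Ii_0$, such that
\[
B=\sum_i r_i B^{(i)},\qquad \mu_j=\sum_i r_i \mu_j^{(i)}\text{ for every }j,
\]
with the \emph{same} weights across all $j$, so that each generalized pair $(X/Z\ni z,B^{(i)}+M^{(i)})$ with $M^{(i)\prime}:=\sum_j \mu_j^{(i)} M_j'$ remains of Fano type over $Z$ and $(\epsilon,\Rr)$-complementary. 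Under the span hypothesis of the moreover clause, further arrange that at least one index $i_0$ satisfies $B^{(i_0)}\ge B$ and $\mu_j^{(i_0)}\ge \mu_j$ for every $j$.

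Then apply the finite-coefficient case of this paper to each $(X/Z\ni z,B^{(i)}+M^{(i)})$. Since all coefficients lie in the single finite set $\Ii_0$, this yields a single $n$ divisible by $p$, depending only on $d,p,\epsilon,\Ii$, and an $(\epsilon,n)$-complement $(X/Z\ni z,B^{(i),+}+M^{(i),+})$ for each $i$. To produce the required complement of the original pair, without the span hypothesis one picks any single index and verifies the complement conditions using the fact that the discrepancy function is concave in $(\coeff(B),\mu_\bullet)$, so the $\epsilon$-lc property passes to the convex combination and hence to any component. Under the span hypothesis, set $B^+:=B^{(i_0),+}\ge B^{(i_0)}\ge B$ and $\mu_j^+:=\mu_j^{(i_0),+}\ge\mu_j^{(i_0)}\ge\mu_j$, which yields the required upward complement.

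The main obstacle is Step~1: one needs a \emph{uniform rational polytope} for $(\epsilon,\Rr)$-complementary generalized pairs — an $\epsilon$-log canonical analogue, in the generalized category, of \cite[Theorem~5.6]{HLS19} — asserting that for fixed prime components of $B$ and fixed b-Cartier nef divisors $M_j'$, the locus of coefficient vectors $(\coeff(B),\mu_\bullet)\in\Rr^{\ge0}$ giving an $(\epsilon,\Rr)$-complementary pair is cut out by finitely many rational linear inequalities whose denominators depend only on $d,\epsilon,\Ii$. This is precisely what forces the existence of the simultaneous, coefficient-bounded Diophantine approximation above, and what makes the span condition effective for upward rational approximation of irrational inputs. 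Extending the polytope result from the log canonical to the $\epsilon$-log canonical case while simultaneously allowing a DCC nef part $\mu_\bullet$ — rather than either a finite nef part (as in the first case of the abstract) or a trivial nef part with $M'\equiv 0$ (as in the second case) — is the genuinely new input the full conjecture demands, and bridging these two special cases is where the bulk of the technical work would lie.
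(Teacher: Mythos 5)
Your architecture is the same as the paper's: decompose the coefficient vector $(\coeff(B),\mu_\bullet)$ into a convex combination of finitely many rational vertices via a uniform rational polytope for $(\epsilon,\Rr)$-complementary generalized Fano-type pairs, solve the finite rational case, and recombine. This is exactly the chain Theorem \ref{thm: uniformpolytopeforepsiloniRcomp} $\Rightarrow$ Theorem \ref{thm: (n,I)complforfiniterat} $\Rightarrow$ Proposition \ref{prop:conjtoconj}. You also correctly locate the missing input: the polytope statement with a genuinely DCC nef part and positive $\epsilon$ is not available, which is precisely why the paper proves the conjecture only in the cases of Theorem \ref{thm: conj for global FT} (projective, with either $\Ii$ finite or $M'\equiv 0$) rather than in full. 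So as a proof of the conjecture your proposal, like the paper, is incomplete; the honest flagging of that gap is fine, but two of the steps you do spell out contain errors.

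First, you cannot in general arrange every rational vertex to remain $(\epsilon,\Rr)$-complementary for the \emph{same} $\epsilon$. When $\epsilon\notin\Qq$ the best one gets (Theorem \ref{thm: uniformpolytopeforepsiloniRcomp}) is that the $k$-th vertex is $(\epsilon_k,\Rr)$-complementary with $\sum a_k\epsilon_k\ge\epsilon$; this is the entire reason the paper introduces $(\bm{\epsilon},n,\Ii_0)$-decomposable complements rather than plain decompositions. Second, your recombination step fails: choosing a single index $i$ and using $(X,B^{(i),+}+M^{(i),+})$ as the complement of $(X,B+M)$ does not verify conditions (3) and (4) of the definition of an $n$-complement, since a vertex $B^{(i)}$ of the polytope typically has some coefficients strictly smaller than those of $B$, so $nB^{(i),+}\ge n\lfloor B\rfloor+\lfloor(n+1)\{B\}\rfloor$ can fail; and the concavity remark is used in the wrong direction ($\epsilon$-lc passes from all components to the convex combination, not from the combination to a component). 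The correct recombination, as in Proposition \ref{prop:conjtoconj}, perturbs the weights $a_i$ to rationals $a_i'$ via the Diophantine lemma \cite[Lemma 6.2]{CH20} so that $na_i'\in n_0\Zz$, $\sum a_i'\epsilon_i\ge\epsilon$, and the rounding inequalities for $B^+=\sum a_i'(\tilde B_i+G_i)$ and $\mu_j^+=\sum_i a_i'\mu_{ij}$ hold; then $n(K_X+B^++M^+)\sim_Z 0$ because each piece is an $n_0$-complement of itself, and generalized $\epsilon$-lc follows from linearity of log discrepancies along the decomposition. Without this perturbation of the weights your argument does not produce an $(\epsilon,n)$-complement of the original pair even in the cases the paper does prove.
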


\begin{rem}
	In Conjecture \ref{conj a complement}, $M'$ is allowed to have irrational coefficients while in \cite{Bir19} $M'$ is a $\Qq$-Cartier divisor whose Cartier index is fixed. The ``Moreover'' part is about monotonicity property of complements which is useful in applications especially when $\bar{\Gamma}\subseteq\Qq$ and does not hold in general.
\end{rem}

\begin{rem}
	Without condition (2), Conjecture \ref{conj a complement} fails even when $\dim X=2$; see \cite[Example 1.7]{CH20}.
\end{rem}

When $\epsilon=0$, the conjecture is proved in \cite{Chen20}. When $\epsilon$ is positive, we have some partial results.

\begin{thm}\label{thm: conj for global FT}
	Conjecture \ref{conj a complement} holds in the following cases:
	\begin{enumerate}
		\item $\epsilon=0$;
		\item $\epsilon>0$, $\dim Z=0$ and $\Gamma$ is a finite set; and
		\item $\epsilon>0$, $\dim Z=0$ and $M'\equiv 0$.
	\end{enumerate}
\end{thm}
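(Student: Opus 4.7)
The plan is as follows. Case (1), with $\epsilon = 0$, is exactly the main theorem of \cite{Chen20}, so here we have nothing to add. The substance of the theorem lies in cases (2) and (3), which both assume $\dim Z = 0$ and $\epsilon > 0$; in each we reduce to a situation where known boundedness results yield a uniform $n$.

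For case (2), where $\Gamma$ is finite, the approach is to combine Birkar's boundedness of $\epsilon$-lc Fano varieties with the generalized complements machinery of \cite{Bir19}. First I would use the finiteness of $\Gamma$ to conclude that both the coefficients of $B$ and the nef coefficients $\mu_j$ lie in a fixed finite set and are bounded away from zero, and upon absorbing the $\mu_j$ into the $M_j'$ the relevant divisors have bounded Cartier index. Extracting from the $(\epsilon,\Rr)$-complementary hypothesis an $\Rr$-complement $(X,B^+ + M^+)$ that is $\epsilon$-lc, and running an appropriate MMP on a small perturbation of $-(K_X+B^++M^+)$, produces a model on which $X$ is an $\epsilon$-lc weak Fano variety, hence lives in a bounded family by BAB. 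On a bounded family the Cartier indices of the relevant Weil divisors are bounded, so one can construct an effective divisor in the appropriate $\Rr$-linear system with denominators dividing a uniform $n$ that still gives an $\epsilon$-lc pair; pulling this divisor back to the original $X$ yields the required $(\epsilon,n)$-complement.

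For case (3), where $M' \equiv 0$, the generalized-pair structure is essentially redundant at the level of singularities on a Fano-type variety, so the strategy is to reduce to the non-generalized case handled by \cite{CH20}. Since $M' \equiv 0$ on $X'$ and each $M_j'$ is nef with $\mu_j\ge 0$, one shows each $M_j'\equiv 0$, and up to $\Rr$-linear equivalence we may treat $(X,B)$ as a usual pair carrying all the essential data: $(X,B)$ is $(\epsilon,\Rr)$-complementary, the coefficients of $B$ lie in the DCC set $\Gamma$, and $X$ is of Fano type. Applying the main theorem of \cite{CH20} to $(X,B)$ produces an $(\epsilon,n)$-complement $(X,B^+)$ with $n$ depending only on $d,p,\epsilon,\Gamma$, and we then take $\mu_j^+=\mu_j$ for each $j$ to promote this back to a generalized $(\epsilon,n)$-complement. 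The monotonicity clause then follows from the corresponding clause in \cite{CH20} under the same Diophantine hypothesis on $\operatorname{Span}_{\Qq_{\ge 0}}(\bar{\Gamma}\cup\{\epsilon\}\setminus\Qq)$.

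The main obstacle I anticipate is in case (2): preserving the generalized-pair $\epsilon$-lc condition both under the MMP and under the replacement of $B^+ + M^+$ by a divisor of bounded Cartier index coming from the bounded family. Singularities of a generalized pair depend on the entire b-divisor $\mathbf{M}$ and not merely on its trace on $X$, and small perturbations of either $B$ or $\mathbf{M}$ can destroy $\epsilon$-lc-ness. This is precisely where the finiteness of $\Gamma$ becomes indispensable: it rules out the Diophantine approximation step of \cite{HLS19}, which typically moves coefficients in ways that are not compatible with keeping all generalized log discrepancies bounded below by $\epsilon$, and it is why extending Conjecture \ref{conj a complement} to DCC coefficients with a nontrivial nef part remains open.
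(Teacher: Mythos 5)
Your case (1) matches the paper. Cases (2) and (3) contain genuine gaps, and both stem from the same misreading: the finite set $\Gamma$ is allowed to contain \emph{irrational} real numbers (the paper explicitly emphasizes this in the remark after Conjecture \ref{conj a complement}), so one cannot argue as you do.

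For case (2), after placing $X$ in a bounded family via BBAB (which you do correctly, and which matches Lemma \ref{lem:babforcomplementarypair}), you propose to ``construct an effective divisor in the appropriate $\Rr$-linear system with denominators dividing a uniform $n$.'' When the coefficients of $B$ and the $\mu_j$ are irrational, $-n(K_X+B+M)$ is not in any natural integral linear system, and no bound on Cartier indices of Weil divisors on the bounded family gives you an $n$-complement directly. The paper's route is to first apply the uniform rational polytope result (Theorem \ref{thm: uniformpolytopeforepsiloniRcomp}, which in turn rests on the uniform linearity of mlds in Theorem \ref{thm:mldlinearity}) to write $K_X+B+M=\sum a_i(K_X+B_i+M_{(i)})$ as a convex combination of \emph{rational} generalized pairs each $(\epsilon_i,\Rr)$-complementary, then apply the effective base-point-free argument (Proposition \ref{prop1}) to each rational piece, and finally use a Diophantine perturbation of the weights $a_i$ (Proposition \ref{prop:conjtoconj}, via \cite[Lemma 6.2]{CH20}) to assemble an actual $(\epsilon,n)$-complement. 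Your closing paragraph has this backwards: finiteness of $\Gamma$ does not ``rule out the Diophantine approximation step''; on the contrary, the Diophantine step is still essential precisely because $\Gamma$ may be finite but irrational, and the uniform linearity of mlds for $\epsilon>0$ is what makes that step compatible with preserving generalized $\epsilon$-lc-ness.

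For case (3), your observation that $M'\equiv 0$ reduces singularities to those of the usual pair $(X,B)$ is fine, but you then invoke the ``main theorem of \cite{CH20}'' to produce an $(\epsilon,n)$-complement in arbitrary dimension. That reference proves $(\epsilon,n)$-complements only for surfaces; the general-dimensional statement for usual pairs with DCC coefficients and $\epsilon>0$ is not available to cite (it is in effect a special instance of what is being proved here). The paper instead establishes it by first applying a DCC-to-finite reduction (Theorem \ref{thm: dcc limit lc complementary}, an $(\epsilon,\Rr)$-complementary analogue of \cite[Theorem 5.19]{HLS19}) to replace $\Gamma$ by a finite set, and then running the same argument as in case (2). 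Your proof of (3) therefore rests on a nonexistent theorem, and you would need to supply the DCC-to-finite reduction and the finite-coefficient argument to make it stand.

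Finally, a structural point: the paper does not prove Theorem \ref{thm: conj for global FT} directly but factors it through the auxiliary notion of $(\bm{\epsilon},n,\Ii_0)$-decomposable $(\epsilon,\Rr)$-complements (Theorem \ref{thm: (n,I)complforfiniterat}) together with Proposition \ref{prop:conjtoconj}. That intermediate object is what allows the irrational data to be handled cleanly, and it is absent from your proposal.
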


In order to show Theorem \ref{thm: conj for global FT}, we study a new class of complements, namely $(\bm{\epsilon},n,\Ii_0)$-decomposable $(\epsilon,\Rr)$-complements. Note that when $\epsilon=\epsilon_i=0$, $(\bm{\epsilon},n,\Ii_0)$-decomposable $(\epsilon,\Rr)$-complements are the same as \cite[Definition 1.2]{Chen20}.

\begin{defn}
	Let $n$ be a positive integer, $\epsilon,\epsilon_i$ non-negative real numbers, $\Ii_0\subseteq(0,1]$ a finite set and $(X,B+M)$ a generalized pair with data $X'\xto X\to Z$ and $M'$. We say that $(X/Z\ni z,B^++M^+)$ is an \emph{$(\bm{\epsilon},n,\Ii_0)$-decomposable $(\epsilon,\Rr)$-complement} of $(X/Z\ni z,B+M)$ if
	\begin{enumerate}
		\item $(X/Z\ni z,B^++M^+)$ is an $(\epsilon,\Rr)$-complement of $(X/Z\ni z,B+M)$,
		\item $K_X+B^++M^+=\sum a_i(K_X+B_i^++M^+_i)$ for some boundaries $B_i^+$ and nef parts $M_i^{+\prime}$, and $a_i\in\Ii_0$ with $\sum a_i=1$ and $\sum a_i\epsilon_i\ge\epsilon$, and
		\item $(X/Z\ni z,B_i^++M_i^+)$ is an $(\epsilon_i,n)$-complement of itself for any $i$.
	\end{enumerate}
\end{defn}

As an important step in the proof of Theorem \ref{thm: conj for global FT}, we show the existence of $(\bm{\epsilon},n,\Ii_0)$-decomposable $(\epsilon,\Rr)$-complements under the conditons of Theorem \ref{thm: conj for global FT}. We remark that when $\epsilon=0$, the theorem is proved in \cite{Chen20}; see \cite[Theorem 1.3]{Chen20}.

\begin{thm}\label{thm: (n,I)complforfiniterat}
	Let $d$ be a positive integer, $\epsilon$ a positive real number and $\Ii\subseteq [0,1]$ a DCC set. Then there exist a positive integer $n$, non-positive real numbers $\epsilon_i$ and a finite set $\Ii_0\subseteq(0,1]$ depending only on $d,\epsilon$ and $\Ii$ satisfying the following. 
	
	Assume that $(X,B+M)$ is a generalized polarized pair with data $X'\xto X$ and $M'=\sum \mu_jM_j'$ such that
	\begin{enumerate}
		\item $\dim X=d$,	
		\item $X$ is of Fano type,
		\item $B\in\Ii$,
		\item $M_j'$ is a b-Cartier nef divisor and $\mu_j\in\Ii$ for any $j$,
		\item $(X,B+M)$ is $(\epsilon,\Rr)$-complementary, and
		\item either $\Gamma$ is a finite set or $M'\equiv 0$.
	\end{enumerate}
	Then there is an $(\bm{\epsilon},n,\Ii_0)$-decomposable $(\epsilon,\Rr)$-complement $(X,B^++M^+)$ of $(X,B+M)$. Moreover, if $\bar{\Ii}\subseteq\Qq$, then we may pick $\Ii_0=\{1\}$, and $(X,B^++M^+)$ is a monotonic $(\epsilon,n)$-complement of $(X,B+M)$.
\end{thm}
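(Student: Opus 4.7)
The plan is to extend the strategy of \cite[Theorem 1.3]{Chen20}, which handles the $\epsilon=0$ case, to positive $\epsilon$. The idea is to decompose the real-coefficient pair into a convex combination of rational-coefficient pairs by Diophantine approximation, apply known boundedness of $(\epsilon_i,n_i)$-complements in the rational setting to each piece, and recombine. The dichotomy ``$\Ii$ finite or $M'\equiv 0$'' is exactly what makes the rational input available.

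Concretely, I would first encode the coefficients of $B$ and the $\mu_j$ into a vector $\bb$ lying in a fixed rational polytope $P\subseteq\RR^N$ determined only by $d,\Ii,\epsilon$. Applying the Diophantine approximation lemma of \cite[\S 5]{HLS19} (in the form used in \cite{Chen20}), I write $\bb=\sum_{i=1}^k a_i\bb_i$ with $\bb_i\in P\cap\QQ^N$, with $a_i\in(0,1]$ in a finite set $\Ii_0$ depending only on $P$, $\sum a_i=1$, and $\|\bb_i-\bb\|$ as small as desired. Simultaneously I write $\epsilon=\sum a_i\epsilon_i$ with rational $\epsilon_i\ge 0$ close to $\epsilon$ and $\sum a_i\epsilon_i\ge\epsilon$. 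For each $i$, let $(X,B_i+M_i)$ be the generalized pair obtained from $\bb_i$; standard perturbation arguments based on the ACC for minimal log discrepancies (and the fact that ``being of Fano type'' is open under small perturbation) guarantee that, once $\|\bb_i-\bb\|$ is small enough, each $(X,B_i+M_i)$ is again of Fano type and $(\epsilon_i,\Rr)$-complementary.

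Applying the rational case to each $(X,B_i+M_i)$ produces monotonic $(\epsilon_i,n_i)$-complements $(X,B_i^++M_i^+)$; the finiteness of $\Ii_0$ lets me take $n=\lcm_i(n_i)$. Setting $B^+=\sum a_i B_i^+$ and $M^+=\sum a_i M_i^+$ yields $K_X+B^++M^+=\sum a_i(K_X+B_i^++M_i^+)$, and the conditions of an $(\bm\epsilon,n,\Ii_0)$-decomposable $(\epsilon,\Rr)$-complement follow by construction. When $\bar{\Ii}\subseteq\QQ$, the vector $\bb$ is already rational, so one takes $k=1$ and $\Ii_0=\{1\}$, giving an honest monotonic $(\epsilon,n)$-complement.

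The main obstacle is the rational input invoked in the previous paragraph: uniform boundedness of $(\epsilon,n)$-complements for $(\epsilon,\Rr)$-complementary generalized Fano-type pairs whose coefficients lie in a fixed finite \emph{rational} set. This is exactly where the two alternative hypotheses split. When $\Ii$ is finite, the nef parts $M_j'$ have bounded Cartier index, so one can adapt the $\epsilon$-lc complement machinery of \cite{CH20} to the generalized setting along the lines of \cite{Bir19}. When $M'\equiv 0$, the generalized structure is numerically trivial and the statement reduces to the ordinary-pair $(\epsilon,n)$-complement theorem of \cite{CH20} for DCC boundaries. A secondary technical issue is that the rational complements $B_i^+$ must be supported on a common divisor so that $\sum a_i B_i^+$ is a genuine boundary; the standard remedy is to first produce an $(\epsilon,\Rr)$-complement of $(X,B+M)$ itself and then perturb \emph{its} coefficients, rather than perturbing $(X,B+M)$ upstream and complementing each perturbation independently.
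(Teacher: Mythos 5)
Your overall plan mirrors the paper's: decompose via Diophantine approximation into a convex combination of rational-coefficient pieces, apply a finite-rational base case to each, and recombine. But the two essential ingredients you invoke are misattributed, and the third is glossed over as standard when it is the paper's main technical contribution. Your finite-rational base case is attributed to \cite{CH20}, which treats only surfaces; the correct argument in all dimensions (Proposition \ref{prop1}) runs a $-(K_X+B+M)$-MMP to a model with nef anti-log-canonical divisor, invokes Lemma \ref{lem:babforcomplementarypair} --- a BBAB-type boundedness statement for $(\epsilon,\Rr)$-complementary Fano-type varieties --- to place $X$ in a bounded family, then bounds the Cartier index by \cite[Lemma 2.25]{Bir19} and concludes with Koll\'ar's effective base point freeness \cite{Kol93}. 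Similarly, when $M'\equiv 0$ you assert the statement ``reduces to the ordinary-pair $(\epsilon,n)$-complement theorem of \cite{CH20} for DCC boundaries,'' but no such theorem exists in dimension $d>2$; the correct reduction from DCC to finite coefficients is Theorem \ref{thm: dcc limit lc complementary}, proved via MMP to a Mori fiber space, restriction to a general fiber, and a contradiction with the global ACC of \cite{HMX14}.

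The ``standard perturbation argument based on the ACC for minimal log discrepancies'' that you use to guarantee each rational piece $(X,B_i+M_i)$ remains $(\epsilon_i,\Rr)$-complementary uniformly in $X$ is precisely Theorem \ref{thm: uniformpolytopeforepsiloniRcomp}, which rests on Theorem \ref{thm:mldlinearity} (uniform linearity of MLDs). That is not a formal ACC consequence: it needs BBAB to bound the family so that the Cartier indices of $K_X+B^i+M^i$ are uniformly bounded, and then the linear-algebraic machinery of \cite[Lemma 5.17]{HLS19} and \cite[Theorem 3.17]{Chen20}. Your aside that the $M_j'$ ``have bounded Cartier index'' because $\Gamma$ is finite conflates two unrelated things: the $M_j'$ are b-Cartier by hypothesis, and the Cartier-index bound that matters comes from BBAB, not from finiteness of $\Gamma$. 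Finally, the ``secondary technical issue'' you raise about common supports is vacuous: each $B_i^+$ is a boundary with coefficients in $[0,1]$, so the convex combination $\sum a_i B_i^+$ with $\sum a_i=1$ is automatically a boundary, and $B^+=\sum a_i(B_i+G_i)\ge\sum a_iB_i=B$ since $G_i\ge 0$.
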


Since we work on $(\epsilon,n)$-complements, we should be careful with the singularities when we prove Theorem \ref{thm: conj for global FT}. A key observation is the uniform linearity of minimal log discrepancies for $(\epsilon,\Rr)$-complementary projective generalized pairs of Fano type when $\epsilon$ is positive. More precisely, we have the following result.

\begin{thm}[=Theorem \ref{thm:mldlinearity}(3)]\label{thm:linearity}
	Let $\epsilon$ be a positive real number, $d,c,m,l$ positive integers, $\bm{r}_0=(r_1,\dots,r_c)\in\Rr^c$ a point such that $r_0=1,r_1,\dots,r_c$ are linearly independent over $\Qq$, and $s_1,\ldots, s_{m+l}: \Rr^{c}\to\Rr$ $\Qq$-linear functions. Then there exist a positive real number $\delta$ and a $\Qq$-linear function $f(\bm{r}):\Rr^c\to\Rr$ depending only on $\epsilon,d,c,m,l,\bm{r}_0$ and $s_i$ satisfying the following.
	
	Assume that $(X,B(\bm{r}_0)+M(\bm{r}_0))$ is a projective generalized pair with data $X'\xto X$ and $M'(\bm{r}_0)=\sum_{j=1}^{l}s_{m+j}(\bm{r}_0)M_j'$ such that
	\begin{itemize}
		\item $\dim X$=d,
		\item $X$ is of Fano type,
		\item $B(\bm{r})=\sum_{i=1}^ms_i(\bm{r})B_i,$ and $B_i\ge0$ are Weil divisors,
		\item $M_j'$ are b-Cartier nef divisors, and
		\item $(X,B(\bm{r}_0)+M(\bm{r}_0))$ is $(\epsilon,\Rr)$-complementary.
	\end{itemize}
	Then there exists a prime divisor $E$ over $X$ such that
	$$\mld(X,B(\bm{r})+M(\bm{r}))=a(E,X,B(\bm{r})+M(\bm{r}))\ge f(\bm{r})$$
	for any $\bm{r}\in\Rr^c$ satisfying $||\bm{r}-\bm{r}_0||_\infty\le\delta$. 
\end{thm}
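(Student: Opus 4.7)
The plan is to find, for each admissible $X$, a single prime divisor $E$ over $X$ that computes $\mld(X, B(\bm{r}) + M(\bm{r}))$ throughout a small ball $\|\bm{r} - \bm{r}_0\|_\infty \le \delta$, and to extract a universal $\Qq$-linear lower bound $f(\bm{r})$ from the resulting data. Because $B(\bm{r})$ and $M'(\bm{r})$ depend $\Qq$-linearly on $\bm{r}$ through the $s_i$, the log discrepancy of any prime divisor $E$ over $X$ is automatically a $\Qq$-linear function $g_E(\bm{r}) := a(E, X, B(\bm{r}) + M(\bm{r}))$ whose coefficients are rationals read off a log resolution on which $E$ appears.

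The crucial step is a finiteness statement: there is a finite list of $\Qq$-linear functions $h_1, \dots, h_M$ depending only on $d, c, m, l, \epsilon, \bm{r}_0$ and the $s_i$ such that, for every admissible $X$ and every $\bm{r}$ sufficiently close to $\bm{r}_0$, one has $\mld(X, B(\bm{r}) + M(\bm{r})) = h_k(\bm{r})$ for some $k$, realized by some prime divisor $E$ over $X$. Such a list should follow from Birkar-style boundedness for $\epsilon$-lc Fano-type generalized pairs of fixed dimension (\cite{Bir19}), together with its real-coefficient refinement in \cite{HLS19, Chen20}: divisors whose log discrepancies approach $\epsilon$ can be extracted from log resolutions of models in a bounded family, so their multiplicities along the components $B_i$ and $M_j'$ lie in finite sets, and hence the resulting $\Qq$-linear functions $g_E$ lie in a finite universal list.

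With the list in hand, pick $k^\ast$ minimizing $h_{k}(\bm{r}_0)$ and set $f := h_{k^\ast}$. For each other $k$, either $h_k(\bm{r}_0) > h_{k^\ast}(\bm{r}_0)$ and strict inequality persists on an open neighborhood, or $h_k(\bm{r}_0) = h_{k^\ast}(\bm{r}_0)$, in which case $h_k - h_{k^\ast}$ vanishes at $\bm{r}_0$, giving a $\Qq$-linear identity in $1, r_1, \dots, r_c$ which, by the hypothesized $\Qq$-linear independence, forces $h_k \equiv h_{k^\ast}$ identically on $\Rr^c$. Shrinking $\delta$ uniformly across the finite list yields $f(\bm{r}) \le h_k(\bm{r})$ throughout the $\delta$-ball for every $k$; hence for each $X$, a divisor $E$ realizing the relevant $h_k$ for that $X$ computes the mld uniformly in $\bm{r}$ on the ball, and $\mld(X, B(\bm{r}) + M(\bm{r})) \ge f(\bm{r})$ there.

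The principal obstacle is the finiteness of the universal list in the second paragraph: one must show that the candidate divisors computing mlds, together with their multiplicity data along the components $B_i$ and $M_j'$, vary only finitely as $X$ ranges over admissible Fano-type varieties and $\bm{r}$ varies slightly near $\bm{r}_0$. This requires combining boundedness for $\epsilon$-lc Fano-type generalized pairs with careful handling of the real-coefficient nef part $M'(\bm{r}_0)$, as well as the Diophantine-approximation techniques of \cite{HLS19} and their generalized-pair version in \cite{Chen20}.
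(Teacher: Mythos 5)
Your high-level strategy matches the paper's: assemble a finite list of $\Qq$-linear candidate functions for the mld, choose the one minimizing at $\bm{r}_0$, and shrink $\delta$ so that the ordering of the list is preserved; the $\Qq$-linear-independence of $r_0,\dots,r_c$ then forces equality of any two functions agreeing at $\bm{r}_0$. This last reduction is indeed the shape of \cite[Lemma 4.7]{CH20}, which the paper invokes. But the finiteness step — which you correctly identify as the ``principal obstacle'' — is left as a gesture, and the mechanism you gesture at is not the one that actually works.

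You propose to get finiteness by arguing that ``divisors whose log discrepancies approach $\epsilon$ can be extracted from log resolutions of models in a bounded family, so their multiplicities along the components $B_i$ and $M_j'$ lie in finite sets.'' This does not directly hold: over any member of a bounded family one can keep blowing up to produce infinitely many prime divisors over $X$, with unbounded multiplicities along $f^* B_i$, and controlling which of those can realize the mld would itself require a nontrivial effective-extraction argument. The paper sidesteps the need to bound divisors or multiplicities altogether. It first uses \cite[Theorem 3.5]{Chen20} to produce a $\delta_0$ so that $(X,B(\bm{r})+M(\bm{r}))$ remains log canonical on the $\delta_0$-ball — a point you do not address, but which is essential, since otherwise nearby mlds could be $-\infty$ and no linear lower bound would exist. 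From this one obtains a convex decomposition $K_X+B(\bm{r}_0)+M(\bm{r}_0)=\sum a_i(K_X+B^i+M^i)$ into finitely many generalized lc pairs with \emph{rational} data, with $a_i$ extendable to $\Qq$-linear coefficient functions $a_i(\bm{r})$ summing to $1$. Boundedness of $X$ (Lemma \ref{lem:babforcomplementarypair}, \cite[Lemma 2.24]{Bir19}) then yields a uniform integer $I_0$ with $I_0(K_X+B^i+M^i)$ Cartier, so for every prime divisor $E$ the quantities $a(E,X,B^i+M^i)$ lie in $\tfrac{1}{I_0}\Zz$. Combined with the uniform upper bound on mlds and the positive lower bound on the $a_i$ near $\bm{r}_0$, this confines all candidate functions $\bm{r}\mapsto\sum a_i(\bm{r})\,a(E,X,B^i+M^i)$ to a finite set of $\Qq$-linear functions. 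That quantization-by-Cartier-index argument, not a boundedness-of-multiplicities argument, is the engine of finiteness; without it (or an equivalent), the rest of your proof does not get off the ground.
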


\medskip

\textit{Structure of the paper.}
We outline the organization of the paper. In Section \ref{section2}, we introduce some notations and tools which will be used in this paper, and prove certain basic results. In section \ref{section3}, we prove Theorem \ref{thm:linearity}. In section \ref{section4}, we prove Theorem \ref{thm: (n,I)complforfiniterat}. In section \ref{section5}, we prove Theorem \ref{thm: conj for global FT}.

\medskip

\textbf{Acknowledgements.} The first author would like to thank his advisor Chenyang Xu for constant support and encouragement. The second author would like to thank his advisor Christopher Hacon for his support. The authors would also like to thank Jingjun Han and Jihao Liu for useful discussions and comments.

\section{Preliminaries}\label{section2}
In this section, we will collect some definitions and preliminary results which will be used in this paper.
\subsection{Arithmetic of sets}
\begin{defn}[DCC and ACC sets]\label{def: DCC and ACC}        
	We say that $\Ii\subseteq\Rr$ satisfies the \emph{descending chain condition} $($DCC$)$ if any decreasing sequence $a_{1} \ge a_{2} \ge \cdots$ in $\Ii$ stabilizes. We say that $\Ii$ satisfies the \emph{ascending chain condition} $($ACC$)$ if any increasing sequence in $\Ii$ stabilizes.
\end{defn}

\subsection{Divisors}
Let $\KK$ be either the rational number field $\Qq$ or the real number field $\Rr$. On a normal variety $X$, a $\KK$-divisor is a finite formal $\KK$-linear combination $D=\sum d_{i} D_{i}$ of prime Weil divisors $D_{i}$, $\mult_{D_{i}} D$ denotes the coefficients $d_{i}$, and a $\KK$-Cartier divisor is a $\KK$-linear combination of Cartier divisors. An $\Rr$-divisor is called a \emph{boundary} (respectively \emph{sub-boundary}) divisor if its coefficients belong to $[0,1]$ (respectively $(-\infty, 1]$). We define $\lfloor D\rfloor=\sum \lfloor d_i\rfloor D_i, \{B\}=\sum \{d_i\}D_i$, and $||D||=\max_i\{|d_i|\}$. For any point $\bm{v}=(v_1,\dots,v_m)\in \Rr^m,$ we define $||\bm{v}||_{\infty}=\max_{1\le i\le m}\{|v_i|\}$.

Let $M$ be an $\Rr$-divisor on $X$. We say that $M$ is \emph{b-Cartier} if it is $\Qq$-Cartier and $\phi^*M$ is Cartier for some birational morphism $\phi:Y\to X$.

We use $\sim_{\KK}$ to denote the $\KK$-linear equivalence of two divisors. For a projective morphism $X\to Z$, we use $\sim_{\KK,Z}$ to denote the relative $\KK$-linear equivalence. 

\begin{defn}[b-divisors]
	Let $X$ be a variety. A \emph{b-$\Rr$-Cartier b-divisor} over $X$ is the choice of a projective birational morphism $Y\to X$ from a normal variety $Y$ and an $\Rr$-Cartier divisor $M$ on $Y$ up to the following equivalence: another projective birational morphism $Y '\to  X$ from a normal variety $Y'$ and an $\Rr$-Cartier divisor $M'$ define the same b-$\Rr$-Cartier b-divisor if there is a common resolution $W\to Y$ and $W\to Y'$ on which the pullbacks of $M$ and $M'$ coincide.
	
	A b-$\Rr$-Cartier b-divisor represented by some $Y\to X$ and $M$ is \emph{b-Cartier} if $M$ is b-Cartier, i.e., its pullback to some resolution is Cartier.
\end{defn}

\subsection{Generalized pairs}

\begin{defn}   
	We say $\pi: X \to Z$ is a \emph{contraction} if $\pi$ is a projective morphism, and $\pi_*\Oo_X = \Oo_Z$ $(\pi$ is not necessarily birational$)$. In particular, $\pi$ is surjective and has connected fibers.
\end{defn}

\begin{defn}[Generalized pairs] 
	A \emph{generalized sub-pair} consists of 
	\begin{itemize}
		\item a normal variety $X$ equipped with a projective morphism $X\to Z$,
		\item an $\Rr$-divisor $B$ on $X$, and
		\item a b-$\Rr$-Cartier b-divisor over $X$ represented by some projective birational morphism $X'\xto X$ and $\Rr$-Cartier divisor $M'$ on $X'$, 
	\end{itemize}
	such that
	\begin{itemize}
		\item $M'$ is nef$/Z$, and
		\item $K_X+B+M$ is $\Rr$-Cartier, where $M$ is the strict transform of $M'$ on $X$.
	\end{itemize}
	We may say that $(X,B+M)$ is a generalized sub-pair with data $X'\xto X\to Z$ and $M'$. 
	We call $B$ the \emph{boundary part} and $M$ the \emph{nef part}. If $\dim Z=0$, the generalized sub-pair is called \emph{projective}, and we will omit $Z$. If $Z=X$ and $X\to Z$ is the identity map, we will omit $Z$. We omit the prefix ``sub" everywhere if $B\ge0$.

	Since a b-$\Rr$-Cartier b-divisor is defined birationally, in practice we will often replace $X'$ by a higher model and replace $M'$ by its pullback.
	Possibly replacing $X$ by a higher model and $M'$ by its pullback, we may assume that $f$ is a log resolution of $(X,B)$, and write
	$$K_{X'}+B'+M'=f^*(K_X+B+M)$$
	for some uniquely determined $B'$. The \emph{generalized log discrepancy} of a divisor $E$ on $X'$ with respect to $(X,B+M)$ is $1-\mult_E{B}$ and denoted by $a(E,X,B+M)$. 
	We say that $(X,B+M)$ is \emph{generalized $\epsilon$-lc} $($respectively \emph{generalized klt, generalized lc}$)$ for some non-negative real number $\epsilon$ if $a(E,X,B+M)\ge \epsilon$ $($respectively $>0,\ge0)$ for any $E$. 
\end{defn}

\begin{defn}
	Let $X\to Z$ be a contraction. We say that $X$ is of \emph{Fano type} over $Z$ if $(X,B)$ is klt and $-(K_{X}+B)$ is big and nef over $Z$ for some boundary $B$.
\end{defn}

\begin{rem}
	Assume that $X$ is of Fano type over $Z$. Then we can run the MMP$/Z$ on any $\Rr$-Cartier divisor $D$ on $X$ which terminates with some model $Y$ (c.f. \cite[Corollary 2.9]{PS09}, \cite{BCHM10}). 
\end{rem}

\begin{defn}[Generalized dlt]
	Let $(X,B+M)$ be a generalized pair with data $X'\xto X\to Z$ and $M'$. We say that $(X,B+M)$ is \emph{generalized dlt} if it is generalized lc and there is a closed subset $V\subseteq X$ such that
	\begin{enumerate}
		\item $X\setminus V$ is smooth and $B|_{X\setminus V}$ is a snc divisor, and
		\item if $a(E,X,B+M)=0$ for some prime divisor $E$ over $X$, then $\Center_X E\nsubseteq V$ and $\Center_XE\setminus V$ is a non-klt center of $(X,B)|_{X\setminus V}$.
	\end{enumerate}
\end{defn}

\begin{rem}
	If $(X,B+M)$ is a $\Qq$-factorial generalized dlt pair with data $X'\xto X\to Z$ and $M'$, then $X$ is klt.
\end{rem}

\begin{defn}[Generalized $a$-lc thresholds]
	Let $(X,B+M)$ be a generalized pair with data $X'\xto X\to Z$ and $M'$. Suppose that $(X\ni x,B+M)$ is generalized $a$-lc at $x$ for some non-negative real number $a$, that is, $a(E,X,B+M)\ge a$ for any prime divisor $E$ over $X$ with $\Center_XE=\bar{x}$, where $x\in X$ is a point with $\dim x<\dim X$. Assume that $D$ is an effective $\Rr$-Cartier divisor and $N'$ is a nef$/Z$ $\Rr$-divisor on $X'$, such that $D+N$ is $\Rr$-Cartier, where $N=f_*N'$. The \emph{generalized $a$-lc threshold of $D+N$ with respect to $(X,B+M)$ at $x$} is defined as
	\begin{align*}
	\alct(X\ni x,B+M;D+N):=\sup\{t\ge0\mid&(X\ni x,(B+tD)+(M+tN))\\&\text{ is generalized $a$-lc at }x\}.
	\end{align*}
\end{defn}
When $M'=N'=0$, we just call $\alct(X\ni x,B;D)$ the $a$-lc threshold of $D$ with respect to $(X,B)$ at $x$.

\subsection{MMP for generalized pairs}
For generalized pairs, one can ask whether one can run MMP and whether it terminates. However the MMP for generalized pairs is not completely established, but some important cases could be derived from the standard MMP. We elaborate some results which are developed in \cite[\S 4]{BZ16}.

Let $(X,B+M)$ be a $\Qq$-factorial generalized pair with data $X'\xto X\to Z$ and $M'$, and $A$ a general ample$/Z$ divisor on $X$. Moreover, assume that

$ (\star)$  for any $0<\epsilon\ll1$, there exists a boundary $\Delta_{\epsilon}\sim_{\Rr,Z}B+M'+\epsilon A$, such that $(X,\Delta_{\epsilon})$ is klt.

Under assumption $(\star)$, we can run a generalized MMP$/Z$ on $(K_X+B+M)$ with scaling of $A$, although the termination is not known (c.f. \cite[\S 4]{BZ16}).

The following lemma shows that assumption $(\star)$ is satisfied in two cases.    
\begin{lem}[{\cite[Lemma 3.5]{HL18}}]
	Let $(X,B+M)$ be a generalized lc pair with data $X'\xto X\to Z$ and $M'$, and $A$ an ample$/Z$ divisor, such that either
	\begin{enumerate}
		\item $(X,B+M)$ is generalized klt, or
		\item $(X,C)$ is klt for some boundary $C$.
	\end{enumerate}   
	Then there exists a boundary $\Delta\sim_{\Rr,Z}B+M+A$ such that $(X,\Delta)$ is klt. Moreover, if $X$ is $\Qq$-factorial, we may run a generalized MMP$/Z$ on $(K_X+B+M)$.
\end{lem}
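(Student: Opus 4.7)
The plan is to construct $\Delta$ on a high log resolution by writing $f^{*}A$ as ``ample plus small exceptional,'' absorbing the nef part $M'$ into a small general effective divisor via Bertini, and then pushing back down to $X$. First I pass to a log resolution $f\colon X'\to X$ of $(X,B)$ through which $M'$ lives, and write $K_{X'}+B'+M'=f^{*}(K_X+B+M)$. Since $X'$ is smooth and $f$ is projective birational, the Negativity Lemma produces an effective $f$-exceptional divisor $F$ on $X'$ with $-F$ being $f$-ample; consequently $H:=f^{*}A-\delta F$ is ample$/Z$ for every sufficiently small $\delta>0$, yielding $f^{*}A\sim_{\Rr,Z}H+\delta F$ with $H$ ample$/Z$ and $\|\delta F\|_{\infty}$ arbitrarily small.

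Since $M'$ is nef$/Z$, the sum $M'+H$ is ample$/Z$. Expressing it as a positive $\Rr$-linear combination of ample Cartier divisors and applying Bertini to very ample multiples of each summand, I produce an effective $\Rr$-divisor $D\sim_{\Rr,Z}M'+H$ with coefficients as small as we please and support snc with $B'+F$. After adjusting $D$ within its $\Rr$-linear equivalence class one arranges the equality $K_{X'}+B'+D+\delta F=f^{*}(K_X+\Delta)$, where $\Delta:=f_{*}(B'+D+\delta F)=B+f_{*}D\ge 0$, so that $\Delta\sim_{\Rr,Z}B+M+A$. In Case~$(1)$, every coefficient of $B'$ is strictly less than $1$ by the generalized klt hypothesis; since only finitely many primes on $X'$ are involved, taking $\delta$ and $\|D\|_{\infty}$ smaller than $1-\max_E \mult_E B'>0$ forces every coefficient of $B'+D+\delta F$ to be $<1$, so $(X,\Delta)$ is klt.

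For Case~$(2)$, where coefficients of $B'$ may equal $1$, I reduce to Case~$(1)$ by passing to the convex combination $(X,\tilde B+\tilde M)$ with $\tilde B:=(1-t)B+tC$, $\tilde M:=(1-t)M$, and nef part represented by $(1-t)M'$ on $X'$, for a small $t\in(0,1)$. A coefficient computation on a common log resolution of $(X,B+C)$ shows this new pair is generalized klt. Since $K_X+B+M$ and $K_X+C$ are $\Rr$-Cartier, so is their difference $B+M-C$, and by openness of ampleness $\tilde A:=A+t(B+M-C)$ remains ample$/Z$ for $t$ small; applying Case~$(1)$ to $(X,\tilde B+\tilde M)$ with $\tilde A$ in place of $A$ delivers $\Delta\sim_{\Rr,Z}\tilde B+\tilde M+\tilde A=B+M+A$ with $(X,\Delta)$ klt. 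For the moreover statement, I replay the entire construction with $A$ replaced by $\epsilon A$, which is ample$/Z$ for every $\epsilon>0$; this yields, for every sufficiently small $\epsilon>0$, a boundary $\Delta_{\epsilon}\sim_{\Rr,Z}B+M+\epsilon A$ with $(X,\Delta_{\epsilon})$ klt, which is exactly condition $(\star)$ stated just before the lemma. Hence a generalized MMP$/Z$ on $K_X+B+M$ with scaling of $A$ may be run as soon as $X$ is $\Qq$-factorial.

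The most delicate point is Case~$(2)$: one must produce $\Delta$ that is $\Rr$-linearly equivalent to $B+M+A$ on the nose and not merely to $(1-t)(B+M)+tC+A$. The fix is to absorb the perturbation $t(C-B-M)$ into the ample divisor, which is legitimate because the difference $B+M-C$ is $\Rr$-Cartier (thanks to $(X,C)$ being klt) and because ampleness is an open condition on the N\'eron--Severi space.
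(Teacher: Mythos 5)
This lemma is quoted in the paper directly from \cite[Lemma 3.5]{HL18}; the paper supplies no proof of its own, so there is nothing internal to compare against, and your argument should be read as an independent reconstruction. As such it is essentially correct and follows what I take to be the standard route: tie-break the nef part on a log resolution by absorbing it into the ample piece, choose a general Bertini member with small coefficients, and push down. Two small points worth tightening. First, the existence of an effective $f$-exceptional $F$ on $X'$ with $-F$ $f$-ample is not a consequence of the Negativity Lemma; it is a separate standard fact about projective birational morphisms from a $\Qq$-factorial (e.g.\ smooth) source, and should be cited as such. Second, the phrase about ``adjusting $D$ within its $\Rr$-linear equivalence class'' to obtain $K_{X'}+B'+D+\delta F=f^*(K_X+\Delta)$ is misleading: for any general choice of $D$ the divisor $K_X+\Delta$ is $\Rr$-Cartier (it is $\Rr$-linearly equivalent over $Z$ to the $\Rr$-Cartier $K_X+B+M+A$), and the difference $f^*(K_X+\Delta)-(K_{X'}+B'+D+\delta F)$ is $f$-exceptional and $f$-numerically trivial, hence vanishes by the Negativity Lemma applied twice; no adjustment is needed. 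Your interpolation in Case~(2) via $\tilde B=(1-t)B+tC$, $\tilde M=(1-t)M$, together with the compensating perturbation $\tilde A=A+t(B+M-C)$ using that $B+M-C$ is $\Rr$-Cartier, is a clean way to reduce to the generalized klt case, and the deduction of $(\star)$ by replacing $A$ with $\epsilon A$ is exactly what the MMP-with-scaling machinery of \cite[\S4]{BZ16} requires.
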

We need the following results.

\begin{lem}[{\cite[Lemma 4.4]{BZ16}}]
	Let $(X,B+M)$ be a generalized lc pair with data $(X'\xto X\to X,M')$ such that $K_{X}+B+M$ is not pseudo-effective$/Z$, and either
	\begin{enumerate}
		\item $(X,B+M)$ is generalized klt, or
		\item $(X,C)$ is klt for some boundary $C$.
	\end{enumerate}
	Then any generalized MMP on $(K_{X}+B+M)$ with scaling of some ample$/Z$ $\Rr$-Cartier divisor terminates with a Mori fiber space.
\end{lem}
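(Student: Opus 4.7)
The plan is to reduce the generalized MMP with scaling to a classical klt MMP with scaling, and then invoke the termination statement of BCHM in the non-pseudo-effective case. Under either hypothesis (1) or (2), the previous lemma lets us produce a klt auxiliary pair, which is the bridge to the classical setting.

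First I would choose $\epsilon \in (0,1)$ small enough that $K_X+B+M+\epsilon A$ is still not pseudo-effective$/Z$; this is possible because the pseudo-effective cone is closed, while non-pseudo-effectivity is an open condition. Applying the previous lemma with the ample divisor $\epsilon A$ in place of $A$, I obtain a boundary $\Delta\sim_{\Rr,Z} B+M+\epsilon A$ such that $(X,\Delta)$ is klt. Note that $K_X+\Delta$ is not pseudo-effective$/Z$.

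Next I would compare the two MMPs step by step. Let $\lambda_1\ge\lambda_2\ge\dots$ be the scaling parameters in the generalized MMP on $K_X+B+M$ with scaling of $A$, and let $R_i$ be the contracted extremal ray at the $i$-th step. Since $(K_{X_i}+B_i+M_i)\cdot R_i<0$ and $(K_{X_i}+B_i+M_i+\lambda_i A_i)\cdot R_i=0$, we get $A_i\cdot R_i>0$. Provided that $\lambda_i>\epsilon$, the identity
\[ (K_{X_i}+\Delta_i)\cdot R_i = (K_{X_i}+B_i+M_i)\cdot R_i + \epsilon A_i\cdot R_i = -(\lambda_i-\epsilon)A_i\cdot R_i<0 \]
shows that $R_i$ is also a $(K_{X_i}+\Delta_i)$-negative extremal ray, and $K_{X_i}+\Delta_i+(\lambda_i-\epsilon)A_i$ is nef$/Z$. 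Hence the same birational step is a step of the classical klt MMP on $K_X+\Delta$ with scaling of $A$, with scaling parameter $\lambda_i-\epsilon$. The key a priori inequality $\lambda_i>\epsilon$ holds throughout: otherwise $K_{X_i}+B_i+M_i+\epsilon A_i$ would be nef$/Z$, and pushing this back to $X$ via the MMP (whose steps contract only $(K_X+B+M)$-negative rays, hence preserve the numerical class in the relevant sense for the pushforward) would force $K_X+B+M+\epsilon A$ to be pseudo-effective$/Z$, contradicting our choice of $\epsilon$.

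Finally, by BCHM the classical MMP on $K_X+\Delta$ with scaling of $A$ terminates, and since $K_X+\Delta$ is not pseudo-effective$/Z$, it must terminate with a Mori fiber space $X_n\to Y$. The contracting extremal ray $R$ of this Mori fiber space satisfies $(K_{X_n}+\Delta_n)\cdot R<0$ and $A_n\cdot R>0$, so $(K_{X_n}+B_n+M_n)\cdot R=(K_{X_n}+\Delta_n)\cdot R-\epsilon A_n\cdot R<0$, and the relative Picard number is one. Thus the generalized MMP terminates with the same Mori fiber space. The main obstacle in this plan is the justification that pseudo-effectivity of $K_X+B+M+\epsilon A$ is genuinely preserved under the generalized MMP steps so that the inequality $\lambda_i>\epsilon$ is maintained; once this is in hand, the comparison with the classical MMP is essentially formal and everything else reduces to BCHM.
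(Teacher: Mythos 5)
Your proposal is correct and follows essentially the same route as the argument behind the cited reference: the paper gives no proof of this lemma (it is quoted from \cite[Lemma 4.4]{BZ16}), and the proof there is precisely your reduction --- choose $\epsilon$ with $K_X+B+M+\epsilon A$ still not pseudo-effective$/Z$, use the preceding lemma to replace $B+M+\epsilon A$ by a klt boundary $\Delta$, observe that the scaling numbers satisfy $\lambda_i>\epsilon$ so every step is a step of the klt MMP on $K_X+\Delta$ with scaling of $A$, and conclude by BCHM. The one point you flag as an obstacle (that $\lambda_i\le\epsilon$ would force $K_X+B+M+\epsilon A$ to be pseudo-effective) is handled exactly as you suggest, by induction together with the negativity lemma on a common resolution, so there is no gap.
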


\begin{lem}[Generalized dlt modification {\cite[Proposition 3.9]{HL18}}]
	Let $(X,B+M)$ be a generalized pair with data $X'\xto X\to Z$ and $M'$. Then possibly replacing $X'$ by a higher model, there exist a $\Qq$-factorial generalized dlt pair $(Y,B_Y+M_Y)$ with data $X'\stackrel{g}\to Y\to Z$ and $M'$, and a contraction $\phi:Y\to X$ such that $K_Y+B_Y+M_Y=\phi^*(K_X+B+M)$. Moreover, each exceptional divisor of $\phi$ is a component of $\lf B_Y\rf$. We call $(Y,B_Y+M_Y)$ a generalized dlt modification of $(X,B+M)$.
\end{lem}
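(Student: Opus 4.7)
The strategy is the standard construction of a $\Qq$-factorial dlt modification, adapted to the generalized setting following Hacon--Liu.

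First I would replace $X'$ by a higher birational model so that $g:X'\to X$ is a log resolution of $(X,B)$, the b-divisor $M$ is already realized by $M'$ on $X'$ (so $M'$ is nef and its further pullback is just its total transform), and $\Exc(g)\cup\Supp g_*^{-1}B$ is a simple normal crossings divisor. Write
\[
K_{X'}+B'+M' = g^*(K_X+B+M),
\]
let $E_1,\dots,E_k$ be the prime $g$-exceptional divisors, and define the candidate boundary
\[
\Gamma := g_*^{-1}B + \sum_{i=1}^k E_i.
\]
A direct computation gives $\Gamma-B' = F := \sum_i a(E_i,X,B+M)\,E_i$, which is $g$-exceptional and, under the implicit generalized lc hypothesis on $(X,B+M)$, effective. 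Since $X'$ is smooth and $\Gamma$ has snc support with coefficients in $[0,1]$, the generalized pair $(X',\Gamma+M')$ is $\Qq$-factorial generalized dlt by construction.

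Next I would run a $(K_{X'}+\Gamma+M')$-MMP over $X$ with scaling of a general ample$/X$ divisor. Using
\[
K_{X'}+\Gamma+M' \;=\; g^*(K_X+B+M) + F
\]
and the fact that $F\ge 0$ is $g$-exceptional, the negativity lemma forces every extremal ray contracted in this MMP to intersect $F$ negatively; hence only divisors $E_i$ with $a(E_i,X,B+M)>0$ can be contracted, and the pushforward of $F$ strictly decreases at each step. To justify existence and termination of this MMP in the generalized setting, I would invoke the BZ16/HL18 framework quoted earlier in the excerpt: a small $\Rr$-linear perturbation lets one verify condition $(\star)$ on $(X',\Gamma+M')/X$, reducing the problem to a standard klt MMP on an effective $g$-exceptional divisor, which terminates. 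Let $\phi:Y\to X$ be the final model, and let $B_Y$ and $M_Y$ denote the pushforwards of $\Gamma$ and $M$.

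It then remains to read off the conclusions. By construction,
\[
K_Y+B_Y+M_Y = \phi^*(K_X+B+M),
\]
$Y$ is $\Qq$-factorial (preserved throughout the MMP), and any surviving $\phi$-exceptional divisor is some $E_i$ with $a(E_i,X,B+M)=0$, hence sits in $\Gamma$ with coefficient $1$ and thus appears as a component of $\lf B_Y\rf$. Finally, $(Y,B_Y+M_Y)$ inherits the generalized dlt property from $(X',\Gamma+M')$ because each MMP step from a generalized dlt pair preserves generalized dlt-ness. The main obstacle is precisely the middle step: justifying the $(K_{X'}+\Gamma+M')$-MMP over $X$ in the generalized setting, where general termination is not available. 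The key reduction is that the MMP can be phrased as one on an effective $g$-exceptional divisor and then pushed through condition $(\star)$ via a small klt perturbation, which is the technical content inherited from \cite{HL18}.
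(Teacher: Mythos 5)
The paper does not actually prove this lemma---it is quoted as \cite[Proposition 3.9]{HL18}---and your argument is precisely the standard construction used in that reference: pass to a log resolution, set $\Gamma=g_*^{-1}B+\sum_iE_i$ (which indeed requires the implicit hypothesis that $(X,B+M)$ is generalized lc, as you correctly flag, since otherwise $F=\Gamma-B'$ is not effective), and run a $(K_{X'}+\Gamma+M')$-MMP over $X$, which is numerically an MMP on the effective $g$-exceptional divisor $F$, with condition $(\star)$ supplied by the smoothness of $X'$. The one place your sketch is looser than it should be is termination: ``the pushforward of $F$ strictly decreases at each step'' is not itself a termination argument; the precise input is that an MMP with scaling on an effective, very exceptional divisor over $X$ terminates and contracts that divisor entirely (Birkar's theorem on very exceptional divisors, which is what \cite{HL18} invokes), after which $F$ is contracted, the crepant equality $K_Y+B_Y+M_Y=\phi^*(K_X+B+M)$ holds exactly, and every surviving $\phi$-exceptional divisor has generalized log discrepancy $0$ and hence appears in $\lfloor B_Y\rfloor$, exactly as you conclude.
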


\subsection{Complements}
We now introduce complements for generalized pairs. Note that our definition of complements is a slightly generalization of the usual definition of complements for generalized pairs (c.f. \cite{Bir19}).
\begin{defn}[Complements]\label{defn:complements}
	Let $(X,B+M)$ be a generalized pair with data $X'\xto X\to Z$ and $M'=\sum \mu_jM_j'$ such that $\mu_j\ge0$ and $M_j'$ are b-Cartier nef$/Z$ divisors. We say that $(X/Z\ni z,B^++M^+)$ is an \emph{$\Rr$-complement} of $(X/Z\ni z,B+M)$ if $(X,B^++M^+)$ is generalized lc, $B^+\ge B,\mu_j^+\ge\mu_j$ for any $j$, and $K_X+B^++M^+\equiv0$ over a neighborhood of $z$, where $M^{+'}=\sum_j\mu_j^+M_j'$. In addition, if $(X/Z\ni z,B^++M^+)$ is generalized $\epsilon$-lc over $z$, we say that $(X/Z\ni z,B^++M^+)$ is an $(\epsilon,\Rr)$-complement of $(X/Z\ni z,B+M)$.
	
	Let $n$ be a positive integer. 
	We say that a generalized pair $(X/Z\ni z,B^++M^+)$ is an \emph{$n$-complement} of $(X/Z\ni z,B+M)$, if over a neighborhood of $z$, we have
	\begin{enumerate}
		\item $(X,B^++M^+)$ is generalized lc,
		\item $n(K_X+B^++M^+)\sim 0$,
		\item $nB^+\ge n\lfloor B\rfloor+\lf(n+1)\{B\}\rf$, and
		\item $n\mu_j^+\ge n\lfloor \mu_j\rfloor+\lf(n+1)\{\mu_j\}\rf$ and $nM^{+'}$ is b-Cartier, where $M^{+'}=\sum \mu_j^+M_j'$.
	\end{enumerate}
	We say that $(X/Z\ni z,B^++M^+)$ is a monotonic $n$-complement of $(X/Z\ni z,B+M)$ if additionally we have $B^+\ge B$ and $\mu^+_j\ge\mu_j$ for any $j$. 
	If $(X/Z\ni z,B^++M^+)$ is generalized $\epsilon$-lc over a neighborhood of $z$, we say that $(X/Z\ni z,B^++M^+)$ is an $(\epsilon,n)$-complement of $(X/Z\ni z,B)$.
\end{defn}

We say that $(X/Z\ni z,B+M)$ is \emph{$(\epsilon,\Rr)$-complementary} (respectively \emph{$(\epsilon,n)$-complementary}) if it has an $(\epsilon,\Rr)$-complement (respectively $(\epsilon,n)$-complement).

If $\dim Z=0$, we will omit $Z$ and $z$.
If for any  $z\in Z$, $(X/Z\ni z,B^++M^+)$ is $(\epsilon,\Rr)$-complementary (respectively $(\epsilon,n)$-complementary), then we say that $(X/Z,B+M)$ is $(\epsilon,\Rr)$-complementary (respectively $(\epsilon,n)$-complementary).

The following lemma is well-known to experts (c.f. \cite[Lemma 3.13]{CH20}). We will use the lemma frequently without citing it in this paper. 
\begin{lem}\label{lem: pullbackcomplements}
	Let $\epsilon$ be a non-negative real number, $(X,B+M)$ be a generalized pair with data $X'\xto X\to Z$ and $M'$. Assume that $g:X\dashrightarrow X''$ is a birational contraction and $B'',M''$ are the strict transforms of $B,M$ on $X'.$
	\begin{enumerate}
		\item If $(X/Z\ni z,B+M)$ is $(\epsilon,\Rr)$-complementary, then $(X''/Z\ni z,B''+M'')$ is $(\epsilon,\Rr)$-complementary.
		\item Let $n$ be a positive integer. If $g$ is $-(K_X+B+M)$-non-positive and $(X''/Z\ni z,B''+M'')$ is $(\epsilon,\Rr)$-complementary (respectively monotonic $(\epsilon,n)$-complementary), then $(X/Z\ni z,B+M)$ is $(\epsilon,\Rr)$-complementary (respectively monotonic $(\epsilon,n)$-complementary).
	\end{enumerate}
\end{lem}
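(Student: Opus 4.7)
The plan is to argue both directions via a common log resolution and the Negativity Lemma. Fix $p\colon W\to X$ and $q\colon W\to X''$ a common log resolution of $g$, chosen high enough that the b-divisors representing $M$ and $M^+$ are Cartier on $W$, with traces $M_W$, $M_W^+$ there.

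For (1), given an $(\epsilon,\Rr)$-complement $(X/Z\ni z,B^++M^+)$, set $B^{+\prime\prime}:=g_*B^+$ and let $M^{+\prime\prime}$ be the trace on $X''$ of the b-divisor of $M^+$. Since $K_X+B^++M^+\equiv 0/Z$ over a neighborhood of $z$, every curve contracted by $g$ over this neighborhood is $(K_X+B^++M^+)$-trivial, so by the Negativity Lemma $K_{X''}+B^{+\prime\prime}+M^{+\prime\prime}$ is $\Rr$-Cartier with
\[
p^*(K_X+B^++M^+)=q^*(K_{X''}+B^{+\prime\prime}+M^{+\prime\prime}).
\]
This crepant identification immediately gives $K_{X''}+B^{+\prime\prime}+M^{+\prime\prime}\equiv 0/Z$ over $z$, preserves all generalized log discrepancies (so the $\epsilon$-lc property descends), and yields the coefficient inequalities $B^{+\prime\prime}\ge g_*B=B''$ and $\mu_j^+\ge\mu_j$ directly.

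For (2), given an $(\epsilon,\Rr)$-complement $(X''/Z\ni z,B^{+\prime\prime}+M^{+\prime\prime})$, construct $B^+$ by crepant pullback: let $B_W^+$ be determined by
\[
K_W+B_W^++M_W^+=q^*(K_{X''}+B^{+\prime\prime}+M^{+\prime\prime}),
\]
put $B^+:=p_*B_W^+$ on $X$, and let $M^+$ be the trace on $X$ of the b-divisor of $M^{+\prime\prime}$. Then $K_X+B^++M^+$ is $\Rr$-Cartier with this equation as its $p$-pullback, hence $K_X+B^++M^+\equiv 0/Z$ over $z$; the crepant identification preserves generalized log discrepancies, so $(X,B^++M^+)$ is generalized $\epsilon$-lc over $z$; and $\mu_j^+\ge\mu_j$ holds at the b-divisor level. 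The essential step is $B^+\ge B$. On non-$g$-exceptional divisors this is inherited from $B^{+\prime\prime}\ge B''$. For a $g$-exceptional prime divisor $E_i\subset X$ (which is $p$-non-exceptional but $q$-exceptional on $W$), the $-(K_X+B+M)$-non-positivity of $g$ combined with the Negativity Lemma gives that $q^*(K_{X''}+B''+M'')-p^*(K_X+B+M)$ is effective and $q$-exceptional; reading multiplicity at $E_i$ yields $a(E_i,X,B+M)\ge a(E_i,X'',B''+M'')$, and this together with $a(E_i,X'',B^{+\prime\prime}+M^{+\prime\prime})\le a(E_i,X'',B''+M'')$ (from $B^{+\prime\prime}\ge B''$, $\mu_j^+\ge\mu_j$) gives
\[
\coeff_{E_i}B^+=1-a(E_i,X'',B^{+\prime\prime}+M^{+\prime\prime})\ge 1-a(E_i,X,B+M)=\coeff_{E_i}B.
\]

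In the monotonic $(\epsilon,n)$-complement case, $n(K_X+B^++M^+)\sim 0/Z$ over $z$ descends from the corresponding relation on $X''$ via $q^*$ and $p_*$, and the floor condition on coefficients transfers divisor-by-divisor: it is identified with the condition on $X''$ along non-$g$-exceptional divisors and is automatic on $g$-exceptional divisors using $B^+\ge B$ together with the integrality of $nB^+$ there. The main technical obstacle will be the bookkeeping of $q$-exceptional contributions in the generalized setting, since $M_W^+$ need not agree with $q^*M^{+\prime\prime}$ on $q$-exceptional divisors; this mismatch must be absorbed into the Negativity-Lemma computation without breaking either the $\Rr$-Cartierness of $K_X+B^++M^+$ or the crepant identification used to transfer log discrepancies and coefficient bounds.
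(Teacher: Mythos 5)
Your proposal is correct and follows essentially the same route as the paper's (the paper in fact omits the proof, deferring to \cite[Lemma 3.13]{CH20}): pass to a common resolution, use the negativity lemma to get the crepant identification $p^*(K_X+B^++M^+)=q^*(K_{X''}+B^{+\prime\prime}+M^{+\prime\prime})$ for (1), and for (2) define the complement on $X$ as $p_*q^*$ of the one on $X''$, with the $-(K_X+B+M)$-non-positivity supplying the inequality $q^*(K_{X''}+B^{+\prime\prime}+M^{+\prime\prime})\ge q^*(K_{X''}+B''+M'')\ge p^*(K_X+B+M)$ that forces $B^+\ge B$. Your divisor-by-divisor verification of $B^+\ge B$ and of the rounding condition for monotonic $(\epsilon,n)$-complements is just an expanded form of the same computation.
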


\subsection{Bounded families}
\begin{defn}
	A \emph{couple} consists of a normal projective variety $X$ and a divisor $D$ on $X$ such that $D$ is reduced. Two couples $(X, D)$ and $(X', D')$ are isomorphic if there exists an isomorphism $X\to X'$ mapping $D$ onto $D'$. A set $\mathcal{P}$ of couples is \emph{bounded} if there exist finitely many projective morphisms $V^{i}\to U^{i}$ of varieties and reduced divisors $C^{i}$ on $V^{i}$ such that for each $(X,D)\in\mathcal{P}$, there exists $i$, a closed point $t \in U^{i}$, and two couples $(X,D)$ and $(V_{t}^{i}, C_{t}^{i})$ are isomorphic, where $V_{t}^{i}$ and $C_{t}^{i}$ are the fibers over $t$ of the morphisms $V^i\to U^{i}$ and $C^{i}\to U^{i}$ respectively.
	
	A set $\mathcal{C}$ of projective pairs $(X,B)$ is said to be \emph{log bounded} if the set of the corresponding set of couples $\{(X, \Supp B)\}$ is bounded. A set $\mathcal{D}$ of projective varieties $X$ is said to be bounded if the corresponding set of couples $\{(X, 0)\}$ is bounded. A log bounded (resp. bounded) set is also called a \emph{log bounded family} (resp. \emph{bounded family}).
\end{defn}
We will need the following theorem.

\begin{thm}[BBAB Theorem, {\cite[Theorem 1.1]{Bir16b}}]\label{thm:bab}
	Let $d$ be a positive integer and $\epsilon$ a positive real number. Then the projective varieties $X$ such that
	\begin{enumerate}
		\item $(X,B)$ is $\epsilon$-lc of dimenison $d$ for some boundary $B$, and
		\item $-(K_X+B)$ is nef and big,
	\end{enumerate}
	form a bounded family.
\end{thm}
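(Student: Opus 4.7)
The plan is to follow Birkar's strategy for proving the Borisov--Alexeev--Borisov conjecture, as developed across his two papers on anti-pluricanonical systems and on singularities of linear systems on Fano varieties.

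First, I would reduce to the case where $X$ itself is a $\Qq$-factorial Fano variety with $\epsilon$-lc singularities. Since $-(K_X+B)$ is big and nef and $\epsilon>0$, one can slightly perturb $B$ to an effective $\Rr$-divisor $B'$ with $(X,B')$ still $(\epsilon/2)$-lc and $-(K_X+B')$ ample. Passing to a small $\Qq$-factorialization and running a $K_X$-MMP yields a model that is either a weak Fano $\epsilon$-lc variety or admits a Mori fiber space structure. In the fibration case, one argues by induction on dimension: the base is of Fano type with controlled singularities (by the canonical bundle formula), and the general fibers are again lower-dimensional $\epsilon$-lc weak Fano, so boundedness passes up from the base and fibers. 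Thus it suffices to treat the case $B=0$ with $X$ an $\epsilon$-lc Fano variety.

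Second, I would establish effective birationality: there exists an integer $m=m(d,\epsilon)$ such that $|-mK_X|$ defines a birational map for every $\epsilon$-lc Fano $X$ of dimension $d$. This is the technical heart of the proof. The argument combines the boundedness of $n$-complements (so that a bounded multiple of $-K_X$ admits an effective lc-pluricanonical section) with the classical method of producing non-klt centers through pairs of general points via lc thresholds, Kollár's tie-breaking to cut centers down to isolated ones, and Kawamata--Viehweg / Nadel vanishing to lift sections from the center. The positivity $\epsilon>0$ is crucial: it bounds lc thresholds away from the wrong direction and prevents the non-klt centers from collapsing onto the singular locus.

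Third, I would bound the anticanonical volume $\vol(-K_X)\le V(d,\epsilon)$. Assuming $\vol(-K_X)$ is very large, one can construct divisors in $|-mK_X|$ with high multiplicity at a general point; comparing with the lower bound on the $\epsilon$-lc threshold of such divisors yields a contradiction. This is Birkar's ``lc thresholds of $\Rr$-linear systems'' argument. Finally, with effective birationality and a uniform volume bound, $|-mK_X|$ maps $X$ birationally into a projective space of bounded dimension and degree; combined with log birational boundedness results (Hacon--McKernan--Xu) and the ACC for lc thresholds, one concludes that $X$ lies in a bounded family.

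The main obstacle is the second step. Producing isolated non-klt centers through general points on an $\epsilon$-lc Fano, while maintaining uniform bounds independent of $X$, requires the full machinery of complements, ACC for lc thresholds, and BCHM-type results applied in an intricate inductive scheme on dimension; this is what made the BAB conjecture resist proof for decades before Birkar.
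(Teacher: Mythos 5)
This statement is quoted verbatim from Birkar's work (\cite[Theorem 1.1]{Bir16b}); the paper gives no proof of it and treats it as a black box, so there is no internal argument to compare yours against. Your sketch is an accurate table of contents for Birkar's actual proof --- reduction to the Fano-type case, effective birationality of $|-mK_X|$ via complements and the creation of isolated non-klt centers, the volume bound via lc thresholds of $\Rr$-linear systems of bounded degree, and the final passage to boundedness through log birational boundedness and ACC --- but it is a roadmap rather than a proof: each of the four steps you list is itself a major theorem occupying a substantial part of \cite{Bir19} and \cite{Bir16b}, and none of them is established or even reduced to something checkable in your write-up. As one concrete caution, your first step is slightly garbled: the steps of an MMP that makes $-K_X$ nef are $K_X$-positive, so they can \emph{decrease} log discrepancies of $X$ itself, and the $\epsilon$-lc condition is not automatically preserved; Birkar's reduction keeps track of the pair $(X,B)$ (and an auxiliary boundary with $K_X+B+A\sim_{\Rr}0$) precisely to avoid this. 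For the purposes of this paper the correct and expected move is simply to cite \cite[Theorem 1.1]{Bir16b}, as the authors do.
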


The following lemma is an easy consequence of BBAB Theorem.

\begin{lem}\label{lem:babforcomplementarypair}
	Let $d$ be a positive integer and $\epsilon$ a positive real number. Then the projective varieties $X$ such that
	\begin{enumerate}
		\item $\dim X=d$,
		\item $X$ is of Fano type, and
		\item $(X,B+M)$ is an $(\epsilon,\Rr)$-complementary projective generalized pair with data $X'\xto X$ and $M'$, for some boundary $B$ and $M'$,
	\end{enumerate}
	form a bounded family.
\end{lem}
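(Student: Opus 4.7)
The plan is to reduce to the BBAB Theorem (Theorem \ref{thm:bab}). By hypothesis, for each $X$ in the family there exists an $(\epsilon, \mathbb{R})$-complement $(X, B^+ + M^+)$, so $(X, B^+ + M^+)$ is generalized $\epsilon$-lc with $K_X + B^+ + M^+ \equiv 0$ and $X$ is of Fano type. The strategy is to produce an ordinary boundary $\Theta$ on $X$ such that $(X, \Theta)$ is $\epsilon'$-lc with $\epsilon' > 0$ depending only on $d$ and $\epsilon$, and $-(K_X + \Theta)$ is nef and big; Theorem \ref{thm:bab} then applies.

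Since $X$ is of Fano type, I first pick a klt boundary $\Delta_0$ with $A := -(K_X + \Delta_0)$ ample. For a small $t > 0$ to be fixed, the interpolated generalized pair with boundary part $(1-t)B^+ + t\Delta_0$ and nef part $(1-t)M^{+\prime}$ has log canonical class $\equiv -tA$, and a direct check using convexity of log discrepancies together with $(X, \Delta_0)$ being klt shows it is generalized $(1-t)\epsilon$-lc. Next, I trade the nef part for an effective divisor: $(1-t)M^{+\prime} + \tfrac{t}{2} f^*A$ is ample on $X'$, hence $\mathbb{R}$-linearly equivalent to an effective divisor $N'$ obtained as a general positive $\mathbb{R}$-linear combination of sections of very ample Cartier classes via Bertini. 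Setting $N := f_*N'$ and $\Theta := (1-t)B^+ + t\Delta_0 + N$ on $X$, the projection formula yields $N \sim_{\mathbb{R}} (1-t)M^+ + \tfrac{t}{2}A$ and a short computation gives
$$K_X + \Theta \equiv (1-t)(K_X + B^+ + M^+) + t(K_X + \Delta_0) + \tfrac{t}{2}A \equiv -\tfrac{t}{2}A,$$
so $-(K_X + \Theta) \equiv \tfrac{t}{2}A$ is ample.

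The main obstacle is showing that $(X, \Theta)$ is uniformly $\epsilon'$-lc with $\epsilon'$ depending only on $d$ and $\epsilon$. At a fixed geometric valuation the log discrepancy of $(X, \Theta)$ differs from that of the interpolated generalized pair essentially by the multiplicity of $N$ along the valuation, and this can be made arbitrarily small through the genericity of the Bertini choice on $X'$, independently of the auxiliary $\Delta_0$; meanwhile the interpolated generalized pair is generalized $(1-t)\epsilon$-lc uniformly in $X$, which gives the bound $\epsilon' = (1-t)\epsilon/2$ after absorbing the perturbation. Once this uniformity is secured, Theorem \ref{thm:bab} applied to $(X, \Theta)$ delivers the boundedness of the family of such $X$.
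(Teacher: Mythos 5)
Your approach is essentially the same as the paper's: interpolate the $(\epsilon,\RR)$-complement with a klt Fano-type boundary $\Delta_0$ to get a generalized $(1-t)\epsilon$-lc pair with ample anti-log-canonical class, then trade the nef part for an effective divisor with small coefficients, and invoke Theorem~\ref{thm:bab}.

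There is, however, one genuine slip in the middle step. You assert that $(1-t)M^{+\prime}+\tfrac{t}{2}f^*A$ is \emph{ample} on $X'$ and then pick $N'$ via Bertini. But $f\colon X'\to X$ contracts its exceptional divisors, so $f^*A$ is only nef and big on $X'$, never ample unless $f$ is an isomorphism; and $M^{+\prime}$ is merely nef. Thus the divisor in question is nef and big but typically not ample, and a Bertini-general member of its linear series does not exist (it may have nontrivial base locus). To repair this, one should use the standard nef-and-big decomposition: by Kodaira's lemma there is a fixed effective $E$ on $X'$ such that $(1-t)M^{+\prime}+\tfrac{t}{2}f^*A\sim_{\RR} E/k+A_k$ for every $k\ge 1$, with $A_k$ a general ample $\RR$-divisor; for $k$ sufficiently large the coefficients of $E/k$ are below $(1-t)\epsilon/2$, and the general choice of $A_k$ keeps everything snc with small multiplicities. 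This is precisely what the paper does. With that substitution your argument goes through and reproduces the paper's proof (the paper takes $t=1/2$ and absorbs the perturbation down to $\epsilon/4$, but the parameterized version you give is equivalent).
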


\begin{proof}
	Since $X$ is of Fano type, there exists a boundary $C$ such that $(X,C)$ is klt and $-(K_X+C)$ is ample. It follows that $(X,D+\frac{M}{2})$ is generalized $\frac{\epsilon}{2}$-lc, and $-(K_X+D+\frac{M}{2})$ is ample, where $D=\frac{B+C}{2}$. 
	Let $A\sim_\Rr -(K_X+D+\frac{M}{2})/2$ be a general ample $\Rr$-divisor such that $(X,D+A+\frac{M}{2})$ is generalized $\frac{\epsilon}{2}$-lc. We may write
	$$K_{X'}+D'+\frac{M'}{2}+f^*A=f^*(K_X+D+\frac{M}{2}+A).$$
	As $\frac{M'}{2}+f^*A$ is nef and big, there exists an effective $\Rr$-divisor $E$ such that for each positive integer $k$, $\frac{M'}{2}+f^*A\sim_\Rr E/k+A_k$ for some general ample $\Rr$-divisor $A_k$. We may choose $k$ sufficiently large such that $(X',D'_k)$ is sub-$\frac{\epsilon}{4}$-lc, where $D'_k=D'+E/k+A_k$. Let $K_X+D_k=f_*(K_{X'}+D'_k)$. Then $(X,D_k)$ is $\frac{\epsilon}{4}$-lc and 
	\begin{align*}
	-(K_X+D_k)&\sim_\Rr -f_*(K_{X'}+D'+\frac{M'}{2}+f^*A)\\&\sim_\Rr -(K_X+D+\frac{M}{2}+A)\sim_\Rr A
	\end{align*}
	is ample. According to Theorem \ref{thm:bab}, $X$ belongs to a bounded family.
\end{proof}

\section{Uniform linearity of minimal log discrepancies}\label{section3}
\subsection{Uniform linearity of MLDs}
In \cite{CH20}, the first author and Han show the uniform linearity of minimal log discrepancies for surface pairs with positive mlds (See \cite[Theorem 4.8]{CH20}). Indeed, one can show the linearity of minimal log discrepancies for generalized pairs with some extra conditions, that is, we have the following conditions.
\begin{thm}\label{thm:mldlinearity}
	Let $\epsilon$ be a non-negative real number, $d,c,m,l$ positive integers, $\bm{r}_0:=(r_1,\dots,r_c)\in\Rr^c$ a point such that $r_0:=1,r_1,\dots,r_c$ are linearly independent over $\Qq$, and $s_1,\ldots, s_{m+l}: \Rr^{c}\to\Rr$ $\Qq$-linear functions. Then there exist a positive real number $\delta$ and a $\Qq$-linear function $f(\bm{r}):\Rr^c\to\Rr$ depending only on $\epsilon,d,c,m,l,\bm{r}_0$ and $s_i$ satisfying the following.
	
	Assume that $(X,B(\bm{r}_0)+M(\bm{r}_0))$ is a projective generalized pair with data $X'\xto X$ and $M'(\bm{r}_0):=\sum_{j=1}^{l}s_{m+j}(\bm{r}_0)M_j'$ such that
	\begin{itemize}
		\item $\dim X$=d,
		\item $X$ is of Fano type,
		\item $B(\bm{r}):=\sum_{i=1}^ms_i(\bm{r})B_i,$ and $B_i\ge0$ are Weil divisors,
		\item $M_j'$ are b-Cartier nef divisors, and
		\item $(X,B(\bm{r}_0)+M(\bm{r}_0))$ is $(\epsilon,\Rr)$-complementary.
	\end{itemize}
	Then the following hold:
	\begin{enumerate}
		\item $f(\bm{r}_0)\ge\epsilon$, and if $\epsilon\in\Qq$, then $f(\bm{r})=\epsilon$ for any $\bm{r}\in\Rr^c$.
		
		\item Suppose that $\epsilon$ is positive, then there exists a prime divisor $E$ over $X$ such that
		$$\mld(X,B(\bm{r})+M(\bm{r}))=a(E,X,B(\bm{r})+M(\bm{r}))\ge f(\bm{r})$$
		for any $\bm{r}\in\Rr^c$ satisfying $||\bm{r}-\bm{r}_0||_\infty\le\delta$. 
		
		\item $(X,B(\bm{r})+M(\bm{r}))$ is $(f(\bm{r}),\Rr)$-complementary for any $\bm{r}\in\Rr^c$ satisfying $||\bm{r}-\bm{r}_0||_\infty\le\delta$.
	\end{enumerate}
\end{thm}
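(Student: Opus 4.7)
The plan combines the boundedness from Lemma \ref{lem:babforcomplementarypair} with a Diophantine approximation argument in the spirit of \cite{HLS19} and \cite{Chen20}, following the template of \cite[Lemma 4.7]{CH20}. First, when $\epsilon>0$, Lemma \ref{lem:babforcomplementarypair} gives that $X$ belongs to a bounded family depending only on $d$ and $\epsilon$. From this I would extract a positive integer $I_0=I_0(d,\epsilon)$ such that $I_0D$ is Cartier for every $\Qq$-Cartier Weil divisor $D$ on $X$, together with a uniform upper bound $N_0=N_0(d,\epsilon)$ on the mlds of boundary pairs on $X$. (The case $\epsilon=0$ of parts (1) and (3) is implicit in \cite{Chen20}.) Next, by \cite[Theorem 3.5]{Chen20} there exists $\delta_0>0$ such that $(X,B(\bm{r})+M(\bm{r}))$ remains generalized lc for all $||\bm{r}-\bm{r}_0||_\infty\le\delta_0$. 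Using the $\Qq$-linear independence of $1,r_1,\dots,r_c$, I would produce finitely many rational points $\bm{v}_1,\dots,\bm{v}_k$ in the $\delta_0$-neighborhood of $\bm{r}_0$ and $\Qq$-linear functions $a_j(\bm{r}):\Rr^c\to\Rr$ with $\sum_j a_j(\bm{r})\equiv 1$, $\sum_j a_j(\bm{r})\bm{v}_j=\bm{r}$, and $a_j(\bm{r}_0)>0$, yielding the affine identity
\[K_X+B(\bm{r})+M(\bm{r})=\sum_j a_j(\bm{r})\bigl(K_X+B(\bm{v}_j)+M(\bm{v}_j)\bigr).\]

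Next, to set up $f$ and prove (1)--(2), note that every $(X,B(\bm{v}_j)+M(\bm{v}_j))$ has rational coefficients, so from the uniform Cartier index one obtains a common integer $I$ with
$a(F,X,B(\bm{v}_j)+M(\bm{v}_j))\in\tfrac{1}{I}\Zz_{\ge 0}\cap[0,N_0/\min_j a_j(\bm{r}_0)]$
for every prime divisor $F$ over $X$. The set
\[\mathcal{F}:=\Bigl\{\bm{r}\mapsto\textstyle\sum_j a_j(\bm{r})b_j\;\Big|\;b_j\in\tfrac{1}{I}\Zz_{\ge 0}\cap[0,N_0/\min_j a_j(\bm{r}_0)],\;\textstyle\sum_j a_j(\bm{r}_0)b_j\ge\epsilon\Bigr\}\cup\bigl(\Qq\cap\{\epsilon\}\bigr)\]
is then a finite family of affine $\Qq$-linear functions. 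I would take $f\in\mathcal{F}$ achieving the minimum at $\bm{r}_0$; then $f(\bm{r}_0)\ge\epsilon$ by construction, and when $\epsilon\in\Qq$ the $\Qq$-linear independence of $1,r_1,\dots,r_c$ forces $f-\epsilon\equiv 0$, proving (1). For (2) I would shrink $\delta\le\delta_0$ so the pointwise ordering of $\mathcal{F}$ is constant on $||\bm{r}-\bm{r}_0||_\infty\le\delta$; any divisor $E$ computing $\mld(X,B(\bm{r}_0)+M(\bm{r}_0))$ (which is chosen from among the finitely many valuations realising values in $\mathcal{F}$) then continues to compute the mld throughout this neighborhood, and the resulting mld is a $\Qq$-linear function $\ge f(\bm{r})$.

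For (3), I would start from the $(\epsilon,\Rr)$-complement $(X,B^+(\bm{r}_0)+M^+(\bm{r}_0))$ and, for each $j$, construct an $(\epsilon_j,\Rr)$-complement $(X,B_j^++M_j^+)$ of $(X,B(\bm{v}_j)+M(\bm{v}_j))$ with $\sum_j a_j(\bm{r}_0)\epsilon_j\ge\epsilon$. The convex combination $(B^+(\bm{r}),M^+(\bm{r})):=\sum_j a_j(\bm{r})(B_j^+,M_j^+)$ then produces an $\bigl(\sum_j a_j(\bm{r})\epsilon_j,\Rr\bigr)$-complement of $(X,B(\bm{r})+M(\bm{r}))$, and by the choice of $f\in\mathcal{F}$ this in particular delivers an $(f(\bm{r}),\Rr)$-complement on a further-shrunken neighborhood of $\bm{r}_0$. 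The main obstacle is the construction of these individual rational complements $(X,B_j^++M_j^+)$ in a uniformly controlled way: the naive shift $B^+(\bm{r}_0)+(B(\bm{v}_j)-B(\bm{r}_0))$ does not generally satisfy $K_X+B^++M^+\equiv 0$, so one must correct it, either by adding a small effective $\Rr$-divisor (using that $X$ is of Fano type, so $-K_X-B(\bm{v}_j)-M(\bm{v}_j)$ admits an effective $\Rr$-representative whose coefficients depend linearly on $\bm{v}_j-\bm{r}_0$) or by running a suitable MMP$/\,$pt on the perturbed pair, while carefully tracking how such corrections perturb log discrepancies so that the $\epsilon_j$-lc bounds survive uniformly on the $\delta$-neighborhood.
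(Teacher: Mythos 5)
Your outline matches the paper's proof in structure: reduce to rational coefficients via \cite[Theorem 3.5]{Chen20}, obtain a uniform Cartier index from boundedness, and then run the \cite[Lemma 4.7]{CH20}-style Diophantine argument to define the finite family $\mathcal{F}$ and pick $f$, proving (1)--(2). The paper gets the Cartier index from \cite[Theorem 1.1]{Chen20} together with \cite[Lemma 2.24]{Bir19}, while you go through Lemma \ref{lem:babforcomplementarypair} and BBAB; these are interchangeable since both rest on boundedness of the underlying Fano-type variety. For part (3), the obstacle you correctly flag — producing uniformly controlled rational $(\epsilon_j,\Rr)$-complements at the rational vertices so that the convex combination works — is precisely what the paper delegates to \cite[Theorem 3.17]{Chen20} (the rational polytope for $\Rr$-complements), so your sketch agrees with the intended route but, like the paper's one-line citation, does not spell out that step.
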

\begin{proof}
	By \cite[Theorem 3.5]{Chen20}, there exists a positive real number $\delta_0$ depending only on $d,c,m$ and $\bm{r}_0$ such that $(X,B(\bm{r})+M(\bm{r}))$ is lc for any $\bm{r}\in\Rr^c$ satisfying $||\bm{r}-\bm{r}_0||_{\infty}\le\delta_0.$ In particular, there exist a finite set $\Gamma_1=\{a_i\}_{i=1}^{k}$ of positive real numbers and a finite set $\Gamma_2$ of non-negative rational numbers depending only on $d,c,m,s_i,\bm{r}_0$ and $\delta_0$ such that
	$$K_X+B(\bm{r}_0)+M(\bm{r}_0)=\sum_{i=1}^{k} a_i(K_X+B^i+M^i),$$ 
	and $(X,B^i+M^i)$ is lc for some $B^i\in\Gamma_2$. Moreover, by \cite[Theorem 1.1]{Chen20} and \cite[Lemma 2.24]{Bir19}, there exists a positive integer $I_0$ such that $I_0(K_X+B_i+M_i)$ is Cartier. Then by the same arguments as in \cite[Lemma 4.7]{CH20} and \cite[Theorem 3.17]{Chen20}, one can find a positive real number $\delta$ with the required properties.
\end{proof}

\subsection{Uniform rational polytopes}

\begin{thm}\label{thm: uniformpolytopeforepsiloniRcomp}
	Let $\epsilon$ be a non-negative real number, $d,m$ and $l$ positive integers, and $\bm{v}_0=(v_1^0,\ldots,v_{m+l}^0)\in\Rr^{m+l}$ a point. Then there exist positive real numbers $a_k$, non-negative real numbers $\epsilon_k$ and points $\bm{v}_k=(v_1^k,\ldots,v_{m+l}^k)\in\Rr^{m+l}$ depending only on $\epsilon,d,m,l$ and $\bm{v}_0$ satisfying the following.   
	\begin{enumerate}
		\item $\sum a_k=1,\sum a_k\bm{v}_k=\bm{v}_0$, $\sum a_k\epsilon_k\ge\epsilon$. Moreover, if $\epsilon\in \Qq$, then we may pick $\epsilon_k=\epsilon$ for any $k$.
		\item Assume that $(X,(\sum_{i=1}^m v_i^0B_{i})+(\sum_{j=1}^{l}v_{m+j}^0M_j))$ is a generalized pair with data $X'\xto X$ and $\sum_{j=1}^{l}v_{m+j}^0M_j'$ such that
		\begin{itemize} 
			\item $\dim X=d$, 
			\item $X$ is of Fano type,
			\item $B_1,\ldots,B_m\ge0$ are Weil divisors on $X$,
			\item $M_j'$ is b-Cartier nef for any $1\le j\le l$, and
			\item $(X,(\sum_{i=1}^m v_i^0B_{i})+(\sum_{j=1}^{l}v_{m+j}^0M_j))$ is $(\epsilon,\Rr)$-complementary.
		\end{itemize}
		Then $(X,(\sum_{i=1}^m v_i^kB_i)+(\sum_{j=1}^{l}v_{m+j}^kM_j))$ is $(\epsilon_k,\Rr)$-complementary for any $k$.
	\end{enumerate}
\end{thm}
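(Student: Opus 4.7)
The plan is to reduce to Theorem \ref{thm:mldlinearity} (uniform linearity of minimal log discrepancies) by encoding the coefficient vector $\bm{v}_0$ through a $\Qq$-basis of its $\Qq$-span, and then to decompose $\bm{v}_0$ as a convex combination of nearby rational points lying inside the linearity neighborhood produced by that theorem.

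First, I would fix a $\Qq$-basis $1 = r_0, r_1, \ldots, r_c$ of the $\Qq$-vector subspace of $\Rr$ spanned by $\{1\} \cup \{v_i^0 : 1 \le i \le m+l\}$ and write each $v_i^0 = s_i(\bm{r}_0)$ for $\Qq$-linear functions $s_i : \Rr^c \to \Rr$, where $\bm{r}_0 := (r_1, \ldots, r_c)$. Both $c$ and the $s_i$ depend only on $\bm{v}_0$. Under this encoding, the hypothesis of Theorem \ref{thm: uniformpolytopeforepsiloniRcomp} is exactly the hypothesis of Theorem \ref{thm:mldlinearity} specialized at $\bm{r} = \bm{r}_0$, uniformly in the pair $(X, B+M)$.

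Applying Theorem \ref{thm:mldlinearity} then gives a uniform $\delta_1 > 0$ and a $\Qq$-linear function $f$ with $f(\bm{r}_0) \ge \epsilon$ (and $f \equiv \epsilon$ when $\epsilon \in \Qq$) such that the family $(X, B(\bm{r}) + M(\bm{r}))$ is $(f(\bm{r}), \Rr)$-complementary whenever $\|\bm{r} - \bm{r}_0\|_\infty \le \delta_1$. Shrinking $\delta_1$ to some $\delta > 0$ if necessary, by continuity I may assume $f \ge 0$ on the closed $\delta$-ball; the borderline case $\epsilon = 0 = f(\bm{r}_0)$ can be handled by choosing $f \equiv 0$, as permitted by part (1). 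I then choose a rational polytope $P \subset \Rr^c$ with rational vertices $\bm{r}_1, \ldots, \bm{r}_N$ inside the $\delta$-ball and containing $\bm{r}_0$ in its interior, and write $\bm{r}_0 = \sum_{k=1}^N a_k \bm{r}_k$ with $a_k > 0$ and $\sum a_k = 1$. The polytope $P$ and the weights $a_k$ can be made to depend only on $\epsilon, d, m, l$ and $\bm{v}_0$.

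Setting $\bm{v}_k := (s_1(\bm{r}_k), \ldots, s_{m+l}(\bm{r}_k))$ and $\epsilon_k := f(\bm{r}_k) \ge 0$, the identities $\sum a_k \bm{v}_k = \bm{v}_0$ and $\sum a_k \epsilon_k = f(\bm{r}_0) \ge \epsilon$ follow immediately from the linearity of the $s_i$ and of $f$, and the ``moreover'' clause when $\epsilon \in \Qq$ follows since $f$ is then the constant $\epsilon$. Finally, part (3) of Theorem \ref{thm:mldlinearity} applied at $\bm{r} = \bm{r}_k$ yields that
\[
\bigl(X,\, \textstyle(\sum_i v_i^k B_i) + \sum_j v_{m+j}^k M_j\bigr) = (X, B(\bm{r}_k) + M(\bm{r}_k))
\]
is $(f(\bm{r}_k), \Rr)$-complementary, i.e.\ $(\epsilon_k, \Rr)$-complementary, as required. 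The substantive work has all been absorbed into Theorem \ref{thm:mldlinearity}; the remaining step is convex geometry on the finite-dimensional parameter space $\Rr^c$ of coefficient vectors. The main obstacle is therefore not in the present statement but in Theorem \ref{thm:mldlinearity} itself, which relies on the BBAB Theorem (to bound the Cartier index of $K_X + B_i + M_i$ after decomposing into finite rational data), the uniform log canonical polytopes of \cite{Chen20}, and crucially the positivity of $\epsilon$ to prevent mlds from dropping to $0$ under small perturbations of coefficients.
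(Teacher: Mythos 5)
Your proposal is correct and takes essentially the same approach as the paper. The paper's proof of this theorem is a one-line deferral to Theorem~\ref{thm:mldlinearity} together with ``the same argument as in [HLS19, Theorem~5.6]'', and what you have written is precisely an unpacking of that argument: encode $\bm{v}_0$ via a $\Qq$-basis of its $\Qq$-span so that the hypotheses of Theorem~\ref{thm:mldlinearity} apply uniformly, invoke part~(3) of that theorem to obtain a $\delta$-ball of parameters on which $(f(\bm{r}),\Rr)$-complementarity holds, pick rational points $\bm{r}_k$ in that ball expressing $\bm{r}_0$ as a convex combination, and read off $\bm{v}_k$, $a_k$, $\epsilon_k = f(\bm{r}_k)$ via the $\Qq$-affine-linearity of the $s_i$ and $f$. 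Your handling of the $\epsilon=0$ and $\epsilon\in\Qq$ borderline cases via part~(1) of Theorem~\ref{thm:mldlinearity}, and your observation that all choices depend only on $\epsilon,d,m,l,\bm{v}_0$ because $c,\bm{r}_0,s_i$ are determined by $\bm{v}_0$, are both correct.
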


\begin{proof}
	The result follows from Theorem \ref{thm:mldlinearity} and the same argument as in \cite[Theorem 5.6]{HLS19}.
\end{proof}

\section{Proof of Theorem \ref{thm: (n,I)complforfiniterat}}\label{section4}

\subsection{From DCC sets to finite sets}
The proofs of Theorem \ref{thm: dcc limit lc} and Theorem \ref{thm: dcc limit lc complementary} are very similar like the proofs of \cite[Theorem 5.18 and Theorem 5.19]{HLS19}, for readers' convenience, we give proofs here.
\begin{thm}\label{thm: dcc limit lc}
	Let $\epsilon$ be non-negative real numbers, $d$ a positive integers, and $\Ii\subseteq [0,1]$ a DCC set. Then there exist a finite set $\Ii'\subseteq \bar\Ii$, and a projection $g:\bar\Ii\to \Ii'$ depending only on $\epsilon,d$ and $\Ii$ satisfying the following. 
	
	Assume that $(X,\sum b_iB_i)$ is a pair such that
	\begin{itemize}
		\item $\dim X=d$, 
		\item $X$ is of Fano type,
		\item $B_i\ge0$ is a $\Qq$-Cartier Weil divisor and $b_i\in\Ii$ for any $i$,
		and
		\item $(X,\sum b_iB_i)$ is $(\epsilon,\Rr)$-complementary.
	\end{itemize}
	Then
	\begin{enumerate}
		\item  $g(\gamma)\ge \gamma$ for any $\gamma\in \Ii$,
		\item $g(\gamma')\ge g(\gamma)$ for any $\gamma'\ge \gamma$, $\gamma,\gamma'\in\Ii$, and
		\item  $(X,\sum g(b_i)B_i)$ is $\epsilon$-lc.
	\end{enumerate}
\end{thm}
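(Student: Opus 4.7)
The plan is to proceed by contradiction, combining compactness of $\bar\Ii \subseteq [0,1]$ with the uniform rational polytope of Theorem \ref{thm: uniformpolytopeforepsiloniRcomp}. I will take $g$ to be the monotone round-up $g(\gamma) := \min\{\gamma^* \in \Ii' : \gamma^* \ge \gamma\}$, which automatically yields (1) and (2); the content is to produce a finite $\Ii' \subseteq \bar\Ii$ depending only on $\epsilon, d, \Ii$ for which (3) holds.

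I first reduce to controlling a bounded number of coefficients. By Lemma \ref{lem:babforcomplementarypair}, every $X$ in the hypothesis lies in a fixed bounded family, so an $(\epsilon, \Rr)$-complement of $(X, \sum b_i B_i)$ has bounded degree against a fixed very ample $H$ on the family. Since each effective Weil divisor $B_i$ satisfies $B_i \cdot H^{d-1} \ge c > 0$ for a uniform $c$, the number of components with coefficient $b_i \ge \delta$ is at most some $m_0(\delta)$ depending only on $d, \epsilon, \delta$.

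Now suppose for contradiction the theorem fails. Choose an increasing exhaustion $\Ii_1' \subseteq \Ii_2' \subseteq \cdots$ of $\bar\Ii$ by finite subsets each containing $1$, with round-up projections $g_n$ tending to the identity uniformly on $\bar\Ii$. For each $n$ there exists a counterexample pair $(X_n, \sum_i b_{n,i} B_{n,i})$ satisfying the hypotheses but with $(X_n, \sum_i g_n(b_{n,i}) B_{n,i})$ not $\epsilon$-lc. Fix $\delta > 0$ small. After passing to a subsequence, the large coefficients $\bm{b}_n^{\ge \delta} \in \bar\Ii^{m_0}$ converge to some $\bm{v}_0 \in \bar\Ii^{m_0}$, and the $(X_n; \Supp \sum_i B_{n,i}^{\ge \delta})$ lie in a single bounded family. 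Apply Theorem \ref{thm: uniformpolytopeforepsiloniRcomp} at $\bm{b}_n^{\ge \delta}$: the construction produces a rational polytope $P_n \ni \bm{b}_n^{\ge \delta}$ with vertices $\bm{v}_k^n$, positive weights $a_k^n$, and $\epsilon_k^n \ge 0$ satisfying $\sum_k a_k^n \epsilon_k^n \ge \epsilon$, such that at every vertex $(X_n, \sum_i (v_i^k)^n B_{n,i}^{\ge \delta})$ is $(\epsilon_k^n, \Rr)$-complementary and hence $\epsilon_k^n$-lc. Since the polytope is determined by the rational envelope of $\bm{b}_n^{\ge \delta}$, and since the DCC property forces these envelopes to take only finitely many values along the sequence, after a further subsequence these data stabilize to uniform vertices $\bm{v}_k$, weights $a_k$, and $\epsilon_k$ with $\sum_k a_k \epsilon_k \ge \epsilon$. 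For $n$ large the rounded vector $(g_n(b_{n,i}))$ also lies in this common polytope with barycentric coordinates $a_{k,n} \to a_k$; the rounded pair is then a convex combination of $\epsilon_k$-lc pairs, hence $(\sum_k a_{k,n} \epsilon_k)$-lc, and the latter exceeds $\epsilon - o(1)$. This contradicts the failure of $\epsilon$-lc for large $n$.

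The main obstacle will be the tail of small coefficients $b_{n,i} < \delta$, which is not captured by the fixed-length polytope argument. This is handled by a separate application of Theorem \ref{thm:mldlinearity} near the origin: on a bounded family, a small increase in a small coefficient of a bounded effective divisor decreases the minimal log discrepancy by a uniformly controlled amount. Choosing $\delta$ small enough (depending only on $d, \epsilon, \Ii$) absorbs the cumulative effect of rounding all tail coefficients up into an $\epsilon/2$ buffer on the mld estimate, so the $\epsilon$-lc conclusion survives. Taking $\Ii' := \Ii_N'$ and $g := g_N$ for $N$ large, as furnished by the contradiction, yields the desired finite projection.
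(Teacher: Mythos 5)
Your high-level structure (reduce to a bounded family, then control the effect of rounding coefficients) matches the paper's, but the mechanism you use in the second step has a genuine gap. After passing to a bounded family, the paper does \emph{not} use Theorem \ref{thm: uniformpolytopeforepsiloniRcomp}; instead it considers the set
\[
\Gamma'' = \{\epsilon\text{-}\lct(X, \textstyle\sum_{i\ne j} b_i B_i; B_j)\}
\]
and proves $\Gamma''$ satisfies ACC by choosing a fiberwise log resolution of the bounded family, so that the relevant discrepancies and multiplicities of the $B_i$'s lie in finite sets. The projection $g$ is then furnished by \cite[Lemma 5.17]{HLS19}, which precisely manufactures a monotone round-up on a DCC set that never crosses an element of a given ACC set; property (3) follows because a failure of $\epsilon$-lc after rounding would produce an $\epsilon$-lc threshold $\beta \in \Gamma''$ strictly between $b_{j+1}$ and $g(b_{j+1})$, contradicting the lemma.

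Your polytope argument does not deliver this. First, the claim that ``the DCC property forces these envelopes to take only finitely many values along the sequence'' is false: a DCC set such as $\{1/2 + 1/n\}$ contains infinitely many distinct rational numbers, each of which is its own rational envelope, so the output of Theorem \ref{thm: uniformpolytopeforepsiloniRcomp} at $\bm{b}_n^{\ge\delta}$ does not stabilize. More fundamentally, when $\bm{b}_n^{\ge\delta}$ is rational the ``polytope'' $P_n$ degenerates to the single point $\bm{b}_n^{\ge\delta}$, so the rounded-up vector $\bigl(g_n(b_{n,i})\bigr)$ is not in $P_n$ at all, and the convex-combination estimate on the mld never gets off the ground. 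Theorem \ref{thm: uniformpolytopeforepsiloniRcomp} only decomposes a fixed $\bm{v}_0$ within its own rational envelope; it provides no control over a round-up that leaves that envelope, which is exactly what $g$ does.

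Two smaller points. The ``tail'' step is unnecessary: since all positive elements of the DCC set $\Ii$ are bounded below by some $c>0$, the bounded degree of $\Supp B$ against a fixed very ample divisor already bounds the total number of nonzero coefficients, so there is no tail to absorb. And the appeal to Theorem \ref{thm:mldlinearity} ``near the origin'' to get a uniform Lipschitz-type bound on the mld is not justified by that theorem, which requires a fixed $\bm{r}_0$ with coordinates $\Qq$-linearly independent together with $1$; on a bounded family, perturbing boundary coefficients can still drop the mld by an amount proportional to the (unbounded) multiplicity of the relevant divisor over the center, so no such uniform control holds without further input.
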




\begin{proof}
	According to \cite[Theorem 5.20]{HLS19}, we only need to show the case when $\epsilon$ is a positive real number.
	We first show that $(X, B=\sum b_iB_i)$ belongs to a log bounded family. Let $c=\min\{\alpha>0\mid\alpha\in\Gamma\}$. 
	By Lemma \ref{lem:babforcomplementarypair}, $X$ belongs to a bounded family. In particular, there exists a very ample divisor $A$ on $X$ and a positive real number $r$ which only depends on $d$ and $\epsilon$ such that $A^{d-1}\cdot(-K_X) \le r$. Since
	\begin{align*}
	\Supp B\cdot A^{d-1}&=\frac{1}{c}(K_X+c\sum B_i-K_X)\cdot A^{d-1}\\&\le\frac{1}{c}(K_X+\sum b_iB_i-K_X)\cdot A^{d-1}\le\frac{r}{c},
	\end{align*}
	$(X, \sum B_i)$ is log bounded by \cite[Lemma 2.20]{Bir19}.
	
	Let $(\mathcal{X}, \mathcal{D}) \to U$ be the corresponding log bounded family. Then there exists a stratification $U_1,\dots,U_l$ of $U$ such that, possibly after taking a finite \'{e}tale cover, each restricted family $(\mathcal{X}_{U_i}, \mathcal{D}_{U_i}) \to U_i$ admits a fiberwise log resolution.
	
	Let $\mathcal{S}$ be the set of pairs $(X,B)$ satisfying the conditions of the theorem, and
	$$
	\Gamma'':= \{\epsilon\text{-}\lct(X, \sum_{i=1}^{j-1} b_iB_i + \sum_{i=j+1}^s b_i B_i; B_{j})\mid (X,\sum_{i=1}^s b_iB_i)\in\mathcal{S},1\le j\le s\}.
	$$
	We claim that $\Gamma''$ satisfies the ACC. Since we can take a fiberwise log resolution of the bounded family $(\mathcal{X}_{U_i}, \mathcal{D}_{U_i}) \to U_i$, for $(X, B)$ which is a fiber of this family, the induced resolution $f: Y \to (X,B)$ is well behaved in the following sense. We may write 
	$$f^* K_X =K_Y+ \sum_k a_kE_k\ \ \text{and}\ \ f^* B_i = \sum_k a_{ik}E_k$$ 
	for some real numbers $a_k=a(E_k,X,0)$ and $a_{ik}\ge0$, then $a_k$ and $a_{ik}$ lie in a finite set which only depends on the bounded family. Hence $m_{jk}:=\mult_{E_k} (f^*(K_X+\sum_{i=1}^{j-1} b_iB_i + \sum_{i=j+1}^s b_i B_i)-K_Y)$ lie in a DCC set which only depends on the DCC set $\Gamma$ and the bounded family. Note that $\epsilon\text{-}\lct(X, \sum_{i=1}^{j-1} b_iB_i + \sum_{i=j+1}^s b_i B_i; B_{j}) = \min_k\{\frac{1-\epsilon-m_{jk}}{a_{jk}}\}$. Therefore it belongs to an ACC set.
	
	Next applying \cite[Lemma 5.17]{HLS19}, there exists a finite set $\Gamma\pr \subset \overline{\Gamma}$ and a projection $g: \overline{\Gamma} \to \Gamma\pr$ satisfying \cite[Lemma 5.17]{HLS19}(1)-(3).
	
	It remains to show (3). Suppose on the contrary that there exists some $0 \le j \le s-1$, such that $(X, \sum_{i=1}^j g(b_i) B_i + \sum_{i =j+1}^s b_i B_i)$ is $\epsilon$-lc, but $(X, \sum_{i=1}^{j+1} (g(b_i) B_i + \sum_{i =j+2}^s b_i B_i)$ is not. Let
	$$
	\beta = \epsilon\text{-}\lct(X, \sum_{i=1}^j g(b_i) B_i + \sum_{i =j+2}^s b_i B_i; B_{j+1}).
	$$
	Then $b_{j+1} \le \beta < g(b_{j+1})$. Note that $\beta \in \Gamma\prpr$, but this contradicts to \cite[Lemma 5.17]{HLS19}(3).
\end{proof}

\begin{thm}\label{thm: dcc limit lc complementary}
	Let $\epsilon$ be non-negative real numbers, $d$ a positive integers, and $\Ii\subseteq [0,1]$ a DCC set. Then there exist a finite set $\Ii'\subseteq \bar\Ii$, and a projection $g:\bar\Ii\to \Ii'$ depending only on $\epsilon,d$ and $\Ii$ satisfying the following. 
	
	Assume that $(X,\sum b_iB_i)$ is a pair such that
	\begin{itemize}
		\item $\dim X=d$, 
		\item $X$ is of Fano type,
		\item $B_i\ge0$ is a $\Qq$-Cartier Weil divisor and $b_i\in\Ii$ for any $i$,
		and
		\item $(X,\sum b_iB_i)$ is $(\epsilon,\Rr)$-complementary.
	\end{itemize}
	Then
	\begin{enumerate}
		\item  $g(\gamma)\ge \gamma$ for any $\gamma\in \Ii$,
		\item $g(\gamma')\ge g(\gamma)$ for any $\gamma'\ge \gamma$, $\gamma,\gamma'\in\Ii$, and
		\item  $(X,\sum g(b_i)B_i)$ is $(\epsilon,\Rr)$-complementary.
	\end{enumerate}
\end{thm}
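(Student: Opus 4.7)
The plan is to parallel the proof of Theorem \ref{thm: dcc limit lc}, replacing the $\epsilon$-log canonical threshold by the analogous $\epsilon$-$\Rr$-complementary threshold
$$\eRcomp(X, \sum_{i \neq j} b_i B_i; B_j) := \sup\{t \ge 0 \mid (X, \sum_{i\neq j} b_i B_i + t B_j) \text{ is } (\epsilon,\Rr)\text{-complementary}\},$$
and replacing the role played there by the linearity of minimal log discrepancies (on a fiberwise log resolution) with the uniform rational polytope provided by Theorem \ref{thm: uniformpolytopeforepsiloniRcomp}. First, I would argue, exactly as in Theorem \ref{thm: dcc limit lc}, that Lemma \ref{lem:babforcomplementarypair} places the varieties $X$ in a bounded family, and the intersection estimate $\Supp B \cdot A^{d-1} \le r/c$ (with $c = \min\{\alpha>0 \mid \alpha \in \Gamma\}$) together with \cite[Lemma 2.20]{Bir19} promotes this to log boundedness of $(X, \sum B_i)$, so that we may work with a fixed log bounded family $(\mathcal{X},\mathcal{D}) \to U$.

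The central new step is to establish the ACC for the set of $\eRcomp$-values attained on pairs in the class under consideration. Suppose for contradiction a strictly increasing sequence $t_k = \eRcomp(X_k, \sum_{i\neq j_k} b_{k,i}B_{k,i}; B_{k,j_k})$ existed. By log boundedness, after passing to a subsequence, I may assume all the underlying couples are isomorphic; fixing the variety $X$ and the Weil divisors $B_i$, the coefficients $b_{k,i}\in\Gamma$ further converge to some $b_i^\infty \in \bar\Gamma$. Applying Theorem \ref{thm: uniformpolytopeforepsiloniRcomp} to the limit vector $\bm{v}_0 = (b_i^\infty)_i$ yields a decomposition $\bm{v}_0 = \sum a_\ell \bm{v}_\ell$ such that $(\epsilon,\Rr)$-complementarity propagates over the rational polytope containing $\bm{v}_0$ in its relative interior. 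Any sufficiently close approximation of $\bm{v}_0$ lying in this polytope remains $(\epsilon,\Rr)$-complementary, which contradicts that the $t_k$ are strictly bounded away from their limit. The same polytope argument also shows that the supremum in the definition of $\eRcomp$ is attained, so these thresholds are genuine values.

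With the ACC established, I would invoke \cite[Lemma 5.17]{HLS19} on the DCC set $\Gamma$ and this ACC set of thresholds to produce a finite set $\Gamma' \subseteq \bar\Gamma$ and a projection $g\colon \bar\Gamma \to \Gamma'$ satisfying properties (1) and (2) together with the non-crossing property: $g$ does not carry any coefficient strictly across an $\eRcomp$-threshold. Property (3) is then verified by replacing coefficients one at a time, exactly as in the last paragraph of the proof of Theorem \ref{thm: dcc limit lc}. If the replacement from $(X, \sum_{i\le j} g(b_i) B_i + \sum_{i>j} b_i B_i)$ to the version with one more substitution were to destroy $(\epsilon,\Rr)$-complementarity, the threshold
$$\beta := \eRcomp\bigl(X, \sum_{i\le j} g(b_i) B_i + \sum_{i>j+1} b_i B_i;\, B_{j+1}\bigr)$$
would satisfy $b_{j+1} \le \beta < g(b_{j+1})$, contradicting the non-crossing property of $g$.

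The main obstacle I anticipate is proving ACC for $\eRcomp$. Unlike the $\epsilon$-lct case in Theorem \ref{thm: dcc limit lc}, where the threshold is read off explicitly from discrepancies $a_k$ and multiplicities $a_{ik}$ on a fiberwise log resolution, the $(\epsilon,\Rr)$-complementary threshold has no such combinatorial formula and must instead be controlled by the uniform polytope. Making the compactness argument interact cleanly with Theorem \ref{thm: uniformpolytopeforepsiloniRcomp}, and in particular verifying that the supremum in the definition of $\eRcomp$ is realized, is the delicate point; once these ingredients are in place, the rest is a direct transcription of the proof of Theorem \ref{thm: dcc limit lc}.
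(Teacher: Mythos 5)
Your strategy --- mirror Theorem~\ref{thm: dcc limit lc} by replacing $\epsilon$-lct with the threshold $\eRcomp$ and invoke \cite[Lemma 5.17]{HLS19} --- is a reasonable template, but the crucial ACC step has a genuine gap, and Theorem~\ref{thm: uniformpolytopeforepsiloniRcomp} does not fill it. Log boundedness does not allow you to assume the couples $(X_k,\Supp B_{(k)})$ are pairwise isomorphic after passing to a subsequence; a log bounded family still contains infinitely many isomorphism types, and that identification is precisely what would let you collapse the sequence onto a single limiting coefficient vector $\bm{v}_0$. More fundamentally, Theorem~\ref{thm: uniformpolytopeforepsiloniRcomp} propagates $(\epsilon,\Rr)$-complementarity \emph{from} $\bm{v}_0$ \emph{outward} to rational vertices, under the hypothesis that $(\epsilon,\Rr)$-complementarity already holds at $\bm{v}_0$; you need the reverse implication (inward, from nearby points, to the limit $\bm{v}_0$ where you do not yet know complementarity), which the theorem does not supply. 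The polytope also lives inside the rational envelope of $\bm{v}_0$, so when $t_\infty$ and the $b_i^\infty$ are rational the polytope degenerates to the single point $\bm{v}_0$ and the argument gives nothing at all. The ``supremum is attained'' claim suffers from the same problem and is likewise not a corollary of the polytope theorem.

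The paper avoids $\eRcomp$ entirely. It applies Theorem~\ref{thm: dcc limit lc} to get a projection $g$ preserving $\epsilon$-lc, so the only remaining obstruction to $(\epsilon,\Rr)$-complementarity of $(X,\sum g(b_i)B_i)$ is pseudo-effectivity of $-(K_X+\sum g(b_i)B_i)$. If pseudo-effectivity failed along a sequence of pairs with $g_k(b)-b\to0$, one runs a $-(K_{X_k}+B'_{(k)})$-MMP to reach a Mori fiber space $Y_k\to Z_k$, restricts to a general fiber $F_k$, and interpolates a single coefficient $b^+_k$ to make the restricted log canonical divisor numerically trivial; the $b^+_k$ form a strictly increasing sequence while the pairs $(F_k,B^+_{F_k})$ are lc with coefficients in a fixed DCC set, contradicting the global ACC \cite[Theorem 1.4]{HMX14}. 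With pseudo-effectivity and $\epsilon$-lc both in hand, running the anti-log-canonical MMP (which terminates in a semiample model because $X$ is of Fano type) produces the required complement. This Mori fiber space plus global ACC mechanism is the engine that your $\eRcomp$-based outline leaves unsupported; the uniform polytope theorem cannot substitute for it.
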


\begin{proof}
	We first show that there exists a finite set $\Gamma' \subseteq \Gamma$ and a projection $g: \overline{\Gamma} \to \Gamma'$ satisfying (1), (2) and that $-(K_X+\sum g(b_i)B_i)$ is pseudo-effective. Suppose it does not hold. By Theorem \ref{thm: dcc limit lc}, there exists a sequence of pairs $(X_k, B_{(k)}:=\sum_i b_{k, i}B_{k, i})$ of dimension $d$ and a sequence of projections $g_k: \overline{\Gamma} \to \overline{\Gamma}$ such that for any $k, i$, $b_{k, i} \in \Gamma$, $B_{k, i} \ge 0$ is a $\QQ$-Cartier Weil divisor, $(X_k, B_{(k)})$ is $(\epsilon,\Rr)$-complementary, $(X_k, B'_{(k)}):=\sum_i g_k(b_{k,i})B_{k,i})$ is $\epsilon$-lc, $b_{k,i} + \frac{1}{k} \ge g_k(b_{k,i}) \ge b_{k,i}$, and $-(K_{X_k}+B'_{(k)})$ is not pseudo-effective.
	
	We may run a $-(K_{X_k}+B'_{(k)})$-MMP with scaling of an ample divisor and reach a Mori fiber space $Y_k \to Z_k$, such that $-(K_{Y_k}+B'_{(Y_k)})$ is anti-ample over $Z_k$, where $B'_{(Y_k)}$ is the strict transform of $B'_{(k)}$ on $Y_k$. Since $-(K_{X_k}+B_{(k)})$ is pseudo-effective, $-(K_{Y_k}+B_{(Y_k)})$ is nef over $Z_k$, where $B_{(Y_k)}$ is the strict transform of $B_{(k)}$ on $Y_k$.
	
	For each $k$, there exists a positive integer $k_j$ and a positive real number $b_k^+$, such that $b_{k, k_j} \le b_k^+ < g_k(b_{k, k_j})$ and $K_{F_k} + B^+_{F_k} \equiv 0$, where
	$$
	K_{F_k}+B_{F_k}^+ := \left.\left(K_{Y_{k}}+(\sum_{i<k_{j}} g_{k}\left(b_{k, i}\right) B_{Y_{k}, i})+b_{k}^{+} B_{Y_{k}, k_{j}}+(\sum_{i>k_{j}} b_{k, i} B_{Y_{k}, i})\right)\right|_{F_{k}},
	$$
	$B_{Y_k,i}$ is the strict transform of $B_{k,i}$ on $Y_k$ for any $i$, and $F_k$ is a general fiber of $Y_k \to Z_k$. Since $(X_k, B_{(k)})$ is $(\epsilon, \mathbb{R})$-complementary, $(Y_k, B_{(Y_k)})$ is lc. Thus by Theorem \ref{thm: dcc limit lc} $(Y_k, B'_{Y_k})$ is lc. Therefore $(F_k, B^+_{F_k})$ is lc.
	
	Since $g_k(b_{k, k_j})$ belongs to the DCC set $\overline{\Gamma}$ for any $k, k_j$, possibly passing to a subsequence, we may assume that $g_k(b_{k, k_j})$ is non-decreasing. Since $g_k(b_{k, k_j}) - b_k^+ >0$ and
	$$
	\lim\limits_{k \to +\infty}(g_k(b_{k, k_j}) - b_k^+) = 0,
	$$
	possibly passing to a sebsequence we may assume that $b^+_k$ is strictly increasing.
	
	Now $K_{F_k} + B_{F_k}^+ \equiv 0$, the coefficients of $B^+_{F_k}$ belong to the DCC set $\overline{\Gamma} \cup \{b^+_k\}_{k=1}^\infty$. Since $b_k^+$ is strictly increasing, this contradicts to the global ACC \cite[Theorem 1.4]{HMX14}.
	
	It remains to show that $(X,\sum g(b_i)B_i)$ is $(\epsilon,\Rr)$-complementary. As $X$ is of Fano type, we may run a $-(K_X+\sum g(b_i) B_{i})$-MMP and get a model $Y$ on which $-(K_Y+\sum g(b_i) B_{Y,i})$ semiample, where $B_{Y,i}$ is the strict transform of $B_i$ on $Y$ for any $i$. Since $(X,\sum b_i B_i)$ is $(\epsilon, \Rr)$-complementary, so is $(Y, \sum b_i B_{Y,i})$, and in particular $(Y, \sum b_i B_{Y,i})$ is $\epsilon$-lc. By the construction $(Y+\sum g(b_i) B_{Y,i})$ is also $\epsilon$-lc. Since $-(K_Y+\sum g(b_i) B_{Y,i})$ is semiample, we can see that $(Y, \sum g(b_i) B_{Y,i})$ is $(\epsilon, \Rr)$-complementary and thus $(X,\sum g(b_i)B_i)$ is also $(\epsilon, \Rr)$-complementary.
\end{proof}

\subsection{Proof of Theorem \ref{thm: (n,I)complforfiniterat}}

We first show that Theorem \ref{thm: conj for global FT} holds when $\Gamma\subseteq[0,1]\cap\Qq$ is a finite set.
\begin{prop}\label{prop1}
	Theorem \ref{thm: conj for global FT} holds when $\Ii\subseteq[0,1]\cap\Qq$ is a finite set. Moreover, $(X,B+M)$ has a monotonic $(\epsilon,n)$-complement $(X,B^++M)$.
\end{prop}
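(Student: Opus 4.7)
The plan is to combine boundedness of $X$ with a Diophantine approximation argument, refining a given $(\epsilon,\Rr)$-complement into an $(\epsilon,n)$-complement with uniformly bounded $n$, in the spirit of \cite{CH20} adapted to the generalized pair setting.

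First, since $(X,B+M)$ is $(\epsilon,\Rr)$-complementary and $X$ is of Fano type, Lemma \ref{lem:babforcomplementarypair} gives that $X$ lies in a bounded family depending only on $d$ and $\epsilon$. In particular there is a positive integer $I_0=I_0(d,\epsilon)$ such that $I_0 D$ is Cartier for every $\Qq$-Cartier Weil divisor $D$ on $X$. Since $\Gamma$ is finite and rational, pick $N$ with $N\Gamma\subseteq\Zz$, so $NB$ is an integral Weil divisor and $NM'$ is b-Cartier.

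Next, fix an $(\epsilon,\Rr)$-complement $(X,\widetilde B+\widetilde M)$ of $(X,B+M)$. Using that $X$ is Fano type, the nef class $\widetilde M-M$ is $\Rr$-linearly equivalent to a sufficiently general effective $\Rr$-Cartier divisor $E\ge 0$, so that $(X,\widetilde B+E+M)$ is a generalized $\epsilon$-lc $\Rr$-complement of $(X,B+M)$ with nef part exactly $M$. Enumerate the prime components of $\widetilde B+E$ as $T_1,\ldots,T_m$, with real coefficient vector $\widetilde{\bm b}\in\Rr^m$, and set
\[H=\{\bm b\in\Rr^m\ :\ K_X+\textstyle\sum_i b_i T_i+M\equiv 0\}.\]
This is a $\Qq$-affine subspace containing $\widetilde{\bm b}$, cut out by $\Qq$-linear equations whose denominators are bounded by $I_0 N$ thanks to the bounded Cartier indices on the bounded family.

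By Theorem \ref{thm:mldlinearity}, there is a uniform $\delta>0$ such that every $\bm b\in H$ with $\|\bm b-\widetilde{\bm b}\|_\infty\le\delta$ and $b_i\ge\mult_{T_i}(B)$ yields a generalized $\epsilon$-lc pair $(X,\sum_i b_i T_i+M)$. A Diophantine argument, using the boundedness of the rational structure of $H$, then produces a rational point $\bm b^+\in H\cap\tfrac{1}{n}\Zz^m$ with $\|\bm b^+-\widetilde{\bm b}\|_\infty\le\delta$ and $b_i^+\ge\mult_{T_i}(B)$, for some positive integer $n$ depending only on $d$, $\Gamma$, $\epsilon$. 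Setting $B^+=\sum_i b_i^+T_i$, the divisor $n(K_X+B^++M)$ is Cartier and numerically trivial, hence linearly equivalent to zero on the Fano type variety $X$, yielding the desired monotonic $(\epsilon,n)$-complement $(X,B^++M)$.

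The main obstacle is the Diophantine step: while Theorem \ref{thm:mldlinearity} gives a local uniform $\delta$, one must verify that the rational lattice $H\cap\tfrac{1}{n}\Zz^m$ is $\delta$-dense in $H$ for a single $n$ depending only on $d$, $\Gamma$, $\epsilon$. This requires the boundedness of $X$ to propagate to the full $\Qq$-linear structure of $N^1(X)_\Qq$, and in particular a careful choice of the effective representative $E$ so that the number of components $m$ and the rational structure of $H$ are controlled uniformly on the bounded family.
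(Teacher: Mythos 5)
Your route is genuinely different from the paper's, and it is substantially harder than it needs to be because it misses the key simplification that the hypothesis $\Gamma\subseteq[0,1]\cap\Qq$ finite is designed to exploit. The paper's proof is short: run a $-(K_X+B+M)$-MMP so that $-(K_X+B+M)$ becomes nef (the resulting pair stays generalized $\epsilon$-lc because $(X,B+M)$ is $(\epsilon,\Rr)$-complementary, and complements pull back along $-(K_X+B+M)$-non-positive maps); use Lemma~\ref{lem:babforcomplementarypair} to put $X$ in a bounded family; observe that, since $B$ and the $\mu_j$ are rational with bounded denominators, $-(K_X+B+M)$ is a nef $\Qq$-Cartier divisor with uniformly bounded Cartier index on this family by~\cite[Lemma~2.25]{Bir19}; and finally apply Koll\'ar's effective base point freeness~\cite{Kol93} to produce a general member of $|-n(K_X+B+M)|$, giving a monotonic $(\epsilon,n)$-complement. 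No Diophantine approximation is involved: $K_X+B+M$ is already a rational divisor, and the whole point of the finite rational hypothesis is to make this so.

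Your proposal, by contrast, takes an arbitrary $(\epsilon,\Rr)$-complement $(X,\widetilde B+\widetilde M)$ and tries to round its (real) coefficient vector inside the $\Qq$-affine slice $H=\{K_X+\sum b_iT_i+M\equiv 0\}$, using Theorem~\ref{thm:mldlinearity} to protect $\epsilon$-lc-ness. This is the machinery one needs when $\Gamma$ contains irrational numbers (as in Theorem~\ref{thm: (n,I)complforfiniterat} and Proposition~\ref{prop:conjtoconj}), not here. Moreover, as you yourself flag, the Diophantine step has a real gap: the number of prime components $T_1,\dots,T_m$ of $\widetilde B+E$ is not bounded (an $\Rr$-complement can have arbitrarily many components with tiny coefficients), so $H$ lives in an ambient $\Rr^m$ of unbounded dimension, and there is no obvious single $n$ making $H\cap\tfrac1n\Zz^m$ $\delta$-dense for all such $(X,\widetilde B)$ in the family. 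Theorem~\ref{thm:mldlinearity} does supply a uniform $\delta$, but only after the irrational coefficients of $\widetilde B$ have been organized into a fixed finite set of $\Qq$-linear functions of a fixed number of irrationals $r_1,\dots,r_c$ --- a reduction your proposal does not carry out. There is also a smaller unverified step: replacing the nef increment $\widetilde M-M$ by a general effective $\Rr$-Cartier representative $E$ while preserving generalized $\epsilon$-lc-ness needs justification (it uses semiampleness on a Fano type variety plus a Bertini-type argument). Both issues disappear in the paper's approach, which never touches the $\Rr$-complement's components and instead manufactures a fresh complement from a pluri-anticanonical section.
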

\begin{proof}
	We may run a $-(K_X+B+M)$-MMP which terminates with a model $X''$ such that $-(K_{X''}+B''+M'')$ is nef, where $B''$ and $M''$ are the strict transforms of $B$ and $M$ on $X''$ respectively. Note that since $(X,B+M)$ is $(\epsilon,\Rr)$-complementary, $(X'',B''+M'')$ is generalized $\epsilon$-lc. Possibly replacing $(X,B+M)$ by $(X'',B''+M'')$, we may assume that $-(K_X+B+M)$ is nef.
	
	By Lemma \ref{lem:babforcomplementarypair}, $X$ belongs to a bounded family. By \cite[Lemma 2.25]{Bir19}, there exists an integer $n_0$ such that $-n_0(K_X+B+M)$ is Cartier. Hence $-n(K_X+B+M)$ is base point free for some positive integer $n$ which only depends on $n_0$ and $d$ by \cite{Kol93}. It implies that $(X,B+M)$ has a monotonic $(\epsilon,n)$-complement $(X,B^++M)$ for some $\Rr$-divisor $B^+\ge B$.    
\end{proof}

\begin{proof}[Proof of Theorem \ref{thm: (n,I)complforfiniterat}]
	We may assume that $1\in\Ii$. Possibly replacing $(X,B+M)$ by a generalized dlt modification, we may assume that $X$ is $\Qq$-factorial. In the case when $M'\equiv 0$, we can apply Theorem \ref{thm: dcc limit lc complementary} and assume that $\Ii$ is a finite set. Therefore, it suffices to prove the case when $\Gamma$ is finite. 
	
	By Theorem \ref{thm: uniformpolytopeforepsiloniRcomp}, there exist two finite sets $\Ii_0\subseteq(0,1],\Ii_1\subseteq[0,1]\cap\Qq$ and non-negative real numbers $\epsilon_i$ depending only on $d$ and $\Ii$ such that $(X,B_i+M_{(i)})$ is $(\epsilon_i,\Rr)$-complementary, $\sum a_i=1,\sum a_i\epsilon_i\ge\epsilon$ and
	$$K_X+B+M=\sum a_i(K_X+B_i+M_{(i)})$$
	for some $a_i\in\Ii_0,B_i\in\Ii_1$ and $M_{(i)}'=\sum_j \mu_{ij}M_{j}'$ with $\mu_{ij}\in\Gamma_1$. Moreover, if $\Ii\subseteq\Qq$, then we may pick $\Ii_0=\{1\},$ $B_i=B$ and $M_{(i)}=M$.
	
	For each $i$, we may run an MMP on $-(K_X+B_i+M_{(i)})$ and terminates with a model $Y_i$, such that $-(K_{Y_i}+B_{Y_i,i}+M_{Y_i,i})$ is nef, where $B_{Y_i,i}$ and 
	$M_{Y_i,i}$ are the strict transforms of $B_i$ and $M'_{(i)}$ on $Y_i$ respectively. Since $(X,B+M)$ is $(\epsilon,\Rr)$-complementary, $(Y_i,B_{Y_i}+M_{Y_i})$ is generalized $\epsilon$-lc, where $B_{Y_i}$ and $M_{Y_i}$ are the strict transforms of $B$ and $M'$ on $Y_i$ respectively. According to the construction of $\Ii_1$, $(Y_i,B_{Y_i,i}+M_{Y_i,i})$ is generalized $\epsilon_i$-lc. By Proposition \ref{prop1}, there exists a positive integer $n$ depending on $d$ and $\Ii_1$, such that $(Y_i,B_{Y_i,i}+M_{Y_i,i})$ has a monotonic $(\epsilon_i,n)$-complement $(Y_i,B_{Y_i,i}^++M_{Y_i,i}))$.  
	
	By Lemma \ref{lem: pullbackcomplements}, $(X,B_{i}+M_{(i)})$ has a monotonic $(\epsilon_i,n)$--complement $(X,(B_{i}+G_i)+M_{(i)})$ for some $G_{i}\ge0$. Let $B^{+}:=\sum a_i(B_{i}+G_i)$. Then $(X,B^++M)$ is an $(\bm{\epsilon},n,\Gamma_0)$-decomposable 
	$(\epsilon,n)$-complement of $(X,B+M)$. We finish the proof.
\end{proof}

\section{Existence of $(\epsilon,n)$-complements}\label{section5}

We propose the following conjecture, which is a generalization of Theorem \ref{thm: (n,I)complforfiniterat}.

\begin{conj}\label{conj: (n,I)complforfiniterat}
	Let $d$ a positive integers, $\epsilon$ a positive real number and $\Ii\subseteq [0,1]$ a DCC set. Then there exist a positive integer $n$, non-positive real numbers $\epsilon_i$ and a finite set $\Ii_0\subseteq(0,1]$ depending only on $d,\epsilon$ and $\Ii$ satisfying the following. 
	
	Assume that $(X,B+M)$ is a generalized pair with data $X'\xto X\to Z$ and $M'=\sum \mu_jM_j'$ such that
	\begin{enumerate}
		\item $\dim X=d$,	
		\item $X$ is of Fano type of $Z$,
		\item $B\in\Ii$,
		\item $M_j'$ is a b-Cartier nef$/Z$ divisor and $\mu_j\in\Ii$ for any $j$, and
		\item $(X/Z,B+M)$ is $(\epsilon,\Rr)$-complementary.
	\end{enumerate}
	Then for any point $z\in Z$, there is an $(\bm{\epsilon},n,\Ii_0)$-decomposable $(\epsilon,\Rr)$-complement $(X/Z\ni z,B^++M^+)$ of $(X/Z\ni z,B+M)$. Moreover, if $\bar{\Ii}\subseteq\Qq$, then we may pick $\Ii_0=\{1\}$, and $(X/Z\ni z,B^++M^+)$ is a monotonic $(\epsilon,n)$-complement of $(X/Z\ni z,B+M)$.
\end{conj}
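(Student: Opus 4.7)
The plan is to reduce Conjecture \ref{conj: (n,I)complforfiniterat} to the absolute case of Theorem \ref{thm: (n,I)complforfiniterat} via MMP, coefficient reductions, and a descent argument along the contraction $X \to Z$, mirroring the strategy used for Theorem \ref{thm: (n,I)complforfiniterat}.

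First I would take a generalized dlt modification to assume $X$ is $\QQ$-factorial, then localize $Z$ near $z$ and run a $-(K_X+B+M)$-MMP over $Z$ to reach a model on which $-(K_X+B+M)$ is nef$/Z$; $(\epsilon,\Rr)$-complementarity is preserved under this MMP by Lemma \ref{lem: pullbackcomplements}. Next I would reduce from the DCC set $\Ii$ to a finite one: when $M' \equiv 0$ this should follow from a relative version of Theorem \ref{thm: dcc limit lc complementary}, while in the general case it should follow from extending the rational-envelope arguments behind Theorem \ref{thm: uniformpolytopeforepsiloniRcomp} to the relative setting, both of which require an appropriate relative boundedness input.

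Having reduced to $\Ii \subset \QQ$ finite, I would apply a relative version of Theorem \ref{thm: uniformpolytopeforepsiloniRcomp} to decompose
\[K_X+B+M \;=\; \sum_i a_i\,(K_X + B_i + M_{(i)})\]
with $a_i$ in a finite set $\Ii_0$, $\sum a_i = 1$, $\sum a_i \epsilon_i \ge \epsilon$, and each $(X/Z \ni z, B_i + M_{(i)})$ being $(\epsilon_i, \Rr)$-complementary with coefficients in a fixed finite rational set. Averaging monotonic $(\epsilon_i, n)$-complements of the summands would then yield the desired $(\bm{\epsilon}, n, \Ii_0)$-decomposable complement, precisely as in the proof of Theorem \ref{thm: (n,I)complforfiniterat}.

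The main step that remains is the construction of a monotonic $(\epsilon, n)$-complement over $z$ in the rational case. The natural approach is to separate on the dimension of the image of $X \to Z$: when $X \to Z$ is generically finite at $z$, one localizes and invokes the absolute Proposition \ref{prop1}; when the contraction has positive-dimensional fibers over $z$, one uses a generalized canonical bundle formula to descend $(X, B+M)$ to a generalized pair on a birational model of $Z$ of strictly smaller dimension, applies induction on $d$ to produce a complement on the base, and pulls it back via Lemma \ref{lem: pullbackcomplements}, using Theorem \ref{thm:linearity} to control the $\epsilon$-lc singularities of the pulled-back complement. The principal obstacle is establishing a generalized canonical bundle formula robust enough to transfer $(\epsilon,\Rr)$-complementarity rather than merely lc-complementarity, which seems to require, at a minimum, a relative analogue of BBAB (Theorem \ref{thm:bab}) for $(\epsilon,\Rr)$-complementary generalized Fano-type contractions — a statement not established in the present paper.
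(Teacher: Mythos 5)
The paper does not prove this statement: it is presented as Conjecture \ref{conj: (n,I)complforfiniterat} and explicitly left open, the authors only establishing the absolute case $\dim Z=0$ (which is Theorem \ref{thm: (n,I)complforfiniterat}) and citing \cite{Chen20} for $\epsilon=0$. Your outline of the reductions --- generalized dlt modification, running a $-(K_X+B+M)$-MMP, passing from DCC to finite coefficients, decomposing via a uniform rational polytope, and averaging monotonic $(\epsilon_i,n)$-complements --- faithfully mirrors the paper's proof of the absolute case, but every ingredient you then invoke in the relative setting is exactly what is missing, and your proposal does not supply any of them.

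Concretely: (i) the boundedness input behind Proposition \ref{prop1} and Theorems \ref{thm: dcc limit lc} and \ref{thm: dcc limit lc complementary} is Lemma \ref{lem:babforcomplementarypair}, i.e.\ BBAB, which is a statement about projective varieties; there is no analogue bounding the total space of a Fano type contraction over a positive-dimensional base, and the effective base point freeness step (Koll\'ar) in Proposition \ref{prop1} likewise has no relative substitute here. (ii) In the generically finite case, localizing near $z$ does not reduce to Theorem \ref{thm: (n,I)complforfiniterat}: that theorem requires $X$ projective with $\dim Z=0$, whereas complements over a neighborhood of a point $z$ under a birational contraction are a genuinely different, local problem. (iii) In the fibration case, the generalized canonical bundle formula yields a moduli b-divisor on the base whose b-Cartier index and positivity are not controlled by anything in this paper; even granting such control, one must show the descended generalized pair is again $(\epsilon',\Rr)$-complementary of Fano type with coefficients in a fixed DCC set, and that pulling back an $(\epsilon',n)$-complement produces an $\epsilon$-lc (not merely lc) pair upstairs. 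Theorem \ref{thm:linearity} cannot do this, as it too is a purely projective statement. Since you yourself flag these points as unresolved, what you have is a reduction scheme rather than a proof, and the statement remains, as in the paper, a conjecture.
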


We also refer readers to \cite[Problem 7.7]{CH20} for a more general formulation for the usual pairs. As we remarked, when $\epsilon=0$, it was proved in \cite{Chen20}, and also see \cite{HLS19} for the usual pairs.

\begin{prop}\label{prop:conjtoconj}
	Conjecture \ref{conj: (n,I)complforfiniterat} implies Conjecture \ref{conj a complement}.
\end{prop}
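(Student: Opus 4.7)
The plan is to deduce the pointwise statement of Conjecture \ref{conj a complement} from the globally $(\epsilon,\Rr)$-complementary version in Conjecture \ref{conj: (n,I)complforfiniterat} by shrinking the base to a neighborhood of $z$ on which the local $(\epsilon,\Rr)$-complementarity extends to every point. Concretely, I start by picking any $(\epsilon,\Rr)$-complement $(X/Z\ni z, B_0^{+}+M_0^{+})$ granted by hypothesis (5). By Definition \ref{defn:complements}, this complement is actually defined over some open neighborhood $U\subseteq Z$ of $z$, in the sense that on $X_U$ the pair $(B_{0,U}^{+}, M_{0,U}^{+})$ is generalized $\epsilon$-lc and $K+B_0^{+}+M_0^{+}\equiv 0/U$. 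I would then replace $Z$ by $U$; dimension, Fano type, and the DCC/b-Cartier-nef hypotheses descend to the open subset, and the same pair $(B_0^{+},M_0^{+})$ now serves as an $(\epsilon,\Rr)$-complement of $(X/Z\ni u,B+M)$ for every $u\in Z$. Hence $(X/Z,B+M)$ is globally $(\epsilon,\Rr)$-complementary in the sense required by Conjecture \ref{conj: (n,I)complforfiniterat}.

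With this in hand, Conjecture \ref{conj: (n,I)complforfiniterat} applies directly and produces, at the chosen point $z\in Z$, a positive integer $n$ and an $(\bm\epsilon,n,\Ii_0)$-decomposable $(\epsilon,\Rr)$-complement $(X/Z\ni z,B^{+}+M^{+})$ of $(X/Z\ni z,B+M)$. Unpacking the definition, the underlying pair is an $(\epsilon,\Rr)$-complement (so in particular $B^{+}\ge B$ and $\mu_j^{+}\ge\mu_j$), and the decomposition witnesses control by $n$ through the $(\epsilon_i,n)$-complements of the components $(X,B_i^{+}+M_i^{+})$; this is the form of output demanded in Conjecture \ref{conj a complement}. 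The divisibility $p\mid n$ is enforced by the standard trick of enlarging $\Ii$ to $\Ii\cup\{\frac{1}{p}\}$ before invoking the conjecture, which compels every admissible $n$ to be a multiple of $p$ without affecting the qualitative conclusion.

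For the ``moreover'' part, the Diophantine hypothesis $\Span_{\Qq_{\ge 0}}(\bar\Ii\cup\{\epsilon\}\setminus\Qq)\cap(\Qq\setminus\{0\})=\emptyset$ is precisely what is needed to arrange the rational polytope of Theorem \ref{thm: uniformpolytopeforepsiloniRcomp} so that all of its vertices dominate the coefficient vector of $(B,M)$ componentwise. Under this choice, each decomposition component $(X,B_i^{+}+M_i^{+})$ already satisfies $B_i^{+}\ge B$ and $\mu_{i,j}^{+}\ge\mu_j$, and monotonicity is preserved by convex combination, yielding $B^{+}\ge B$ and $\mu_j^{+}\ge\mu_j$. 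The main obstacle I anticipate is exactly this monotonic selection of vertices: one must rule out the possibility that a rational linear relation among the irrational coordinates of $(B,\mu_j)$ forces some vertex to lie below the original point, which is precisely where the Diophantine condition is used. Once that is done, the remainder of the argument is linear-algebraic bookkeeping in the spirit of \cite[Section 5]{HLS19}.
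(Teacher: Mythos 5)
There is a genuine gap at the center of your argument: you assert that the $(\bm{\epsilon},n,\Ii_0)$-decomposable $(\epsilon,\Rr)$-complement produced by Conjecture \ref{conj: (n,I)complforfiniterat} ``is the form of output demanded in Conjecture \ref{conj a complement}.'' It is not. Conjecture \ref{conj a complement} demands an $(\epsilon,n)$-complement, which by Definition \ref{defn:complements} requires $n(K_X+B^++M^+)\sim 0$ over a neighborhood of $z$ together with the coefficient inequalities $nB^+\ge n\lfloor B\rfloor+\lfloor(n+1)\{B\}\rfloor$ and the analogous condition on the $\mu_j^+$. A decomposable $(\epsilon,\Rr)$-complement only gives $K_X+\tilde B+\tilde M\equiv 0$ together with a decomposition $K_X+\tilde B+\tilde M=\sum a_i(K_X+\tilde B_i+\tilde M_i)$ in which each piece is $n_0$-torsion but the weights $a_i\in\Ii_0$ are real and possibly irrational; a real convex combination of $n_0$-torsion divisors is not itself $n$-torsion for any $n$. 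The entire content of the proposition is the conversion step you skip: one must perturb the $a_i$ to rational numbers $a_i'$ (via the Diophantine approximation result \cite[Lemma 6.2]{CH20}) so that $\sum a_i'=1$, $\sum a_i'\epsilon_i\ge\epsilon$ (preserving $\epsilon$-lc-ness of the recombined pair), $na_i'\in n_0\Zz$ (so that $n\sum a_i'(K_X+\tilde B_i+\tilde M_i)\sim_Z 0$), and the coefficient inequalities for $B^+:=\sum a_i'\tilde B_i$ and $\mu_j^+:=\sum_i a_i'\mu_{ij}$ hold. Without this step you have produced no $n$-complement at all.

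Two smaller points. First, your mechanism for forcing $p\mid n$ (enlarging $\Ii$ by $\{1/p\}$) does not work: the $n$-complement coefficient condition with a coefficient $1/p$ does not compel $p\mid n$. In the correct argument the divisibility is simply built into the choice of $n$ in the approximation step ($n$ is chosen divisible by $pn_0$). Second, for the ``moreover'' part you attribute the monotonicity to a choice of vertices in Theorem \ref{thm: uniformpolytopeforepsiloniRcomp}, but the monotonicity issue actually lives in the approximation of the $a_i$ by $a_i'$: one needs $\sum a_i'\tilde B_i\ge B$ rather than merely the rounded inequality, and this is where the hypothesis $\Span_{\Qq_{\ge0}}(\bar\Ii\cup\{\epsilon\}\setminus\Qq)\cap(\Qq\setminus\{0\})=\emptyset$ enters, via \cite[Lemma 6.2]{CH20} and \cite[Lemma 6.3]{HLS19}. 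Your opening reduction (shrinking $Z$ near $z$ to pass from the pointwise to the global $(\epsilon,\Rr)$-complementary hypothesis) is fine and matches the paper.
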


\begin{proof}
	By assumption, there exists a positive integer $n_0$, two finite sets $\Ii_0\subseteq(0,1]$ and $\mathcal{S}=\{\epsilon_i\}$ of non-negative real numbers depending only on $d$ and $\Ii$, such that possibly shrinking $Z$ near $z$, $(X/Z,B+M)$ has an $(\bm{\epsilon},n_0,\Ii_0)$-decomposable $(\epsilon,\Rr)$-complement $(X/Z,\tilde{B}+\tilde{M})$. In particular, there exist $a_i\in\Ii_0,\epsilon_i\in\mathcal{S}$ and boundaries $\tilde{B}_i'$ with nef parts $\tilde{M}_i'$ such that $\sum a_i=1,\sum a_i\epsilon_i\ge\epsilon,$  $(X/Z,\tilde{B}_i+\tilde{M}_i)$ is an $(\epsilon_i,n_0)$-complement of itself for any $i$, and
	$$K_X+\tilde{B}+\tilde{M}=\sum a_i(K_X+\tilde{B}_i+\tilde{M}_i).$$
	By \cite[Lemma 6.2]{CH20}, there exists a positive integer $n$ divisible by $pn_0$ depending only on $\epsilon,p,n_0,\Ii_0$ and $\mathcal{S}$ such that there exist positive rational numbers $a_i'$ with the following properties:
	\begin{itemize}
		\item $\sum a_i'=1$, $\sum a_i'\epsilon_i\ge\epsilon$,
		\item $n\bm{v}\in\Zz^s$, and $na_i'\in n_0\Zz$ for any $i$,
		\item $nB^+\ge n\lfloor \tilde{B}\rfloor+\lfloor (n+1)\{\tilde{B}\}\rfloor$, where $B^+:=\sum a_i' \tilde{B}_i'$, and
		\item $n\sum_{i} a_i'\mu_{ij}=n\lfloor \sum_{i} a_i\mu_{ij} \rfloor+\lfloor (n+1)\{\sum_{i} a_i\mu_{ij}\}\rfloor$ for any $j$, where $\tilde{M}_i'=\sum_j\mu_{ij}M_j'$.
	\end{itemize}
	Let $\mu_j^+=\sum_i a_i'\mu_{ij}$ for any $j$, and $M^{+'}=\sum_i a_i'\tilde{M}_i'=\sum_j\mu_j^+M_j'$, then
	\begin{align*}
	n(K_{{X}}+B^++M^+)&=n\sum a_i'(K_{X}+\tilde{B}_i+\tilde{M}_i)
	\\&=\sum \frac{a_i'n}{n_0}\cdot n_0(K_{X}+\tilde{B}_i+\tilde{M}_i)\sim_Z0.
	\end{align*}
	We conclude that $(X/Z,B^++M^+)$ is an $(\epsilon,n)$-complement of $(X/Z,B+M)$, since $\tilde{B}\ge B$ and $\tilde{\mu}_j\ge\mu_j$ for any $j$.
	
	Moreover, if $\Span_{\Qq_{\ge0}}(\bar{\Ii}\backslash\Qq)\cap (\Qq\backslash\{0\})=\emptyset$, then we may pick ${B}^+\ge B'\ge B$ and $\mu_j^+\ge\mu_j$ by \cite[Lemma 6.2]{CH20} and \cite[Lemma 6.3]{HLS19}.
\end{proof}

\begin{proof}[Proof of Theorem \ref{thm: conj for global FT}]
	(1) is proved in \cite{Chen20}.
	(2) and (3) follow from Proposition \ref{prop:conjtoconj}, since under the conditions in either (2) or (3), Conjecture \ref{conj: (n,I)complforfiniterat} is just Theorem \ref{thm: (n,I)complforfiniterat}, which is already verified.
\end{proof}

\end{document}